\tikzset{overcross/.style={double, line width=1.5, white, double=#1, double distance=\knotlinewidth},
    overcross/.default={black},
    knot/.style={line width=\knotlinewidth, baseline=-.5ex}}
\newcommand{\knotlinewidth}{1pt}
\newcommand{\RIa}[1][]{\tikz[knot, #1]{\draw(-.5,.5) to[out=-90,in=-90] (.5,0); \draw[overcross] (.5,0) to[out=90,in=90] (-.5,-.5);}}
\newcommand{\RIb}[1][]{\tikz[knot, #1]{\draw[looseness=.8] (-.5,-.5) to[out=90, in=-90] (.5,0) to[out=90, in=-90] (-.5,.5);}}
\newcommand{\RIIa}[1][]{\tikz[knot, #1]{\draw[looseness=2.3] (-.5,-.5) to[out=0, in=0] (-.5,.5); \draw[looseness=2.3, overcross=black] (.5,-.5) to[out=180, in=180] (.5,.5);}}
\newcommand{\RIIb}[1][]{\tikz[knot, #1]{\draw[looseness=1.4] (-.5,-.5) to[out=0, in=0] (-.5,.5);\draw[looseness=1.4] (.5,.5) to[out=180,in=180] (.5,-.5);}}
\newcommand{\RIIIa}[1][]{\tikz[knot, #1]{\draw[black] (-120:.58) to[out=60,in=-120] (150:.2) to[out=60, in=-120] (60:.58); \draw[rotate=-120, overcross=black] (-120:.58) to[out=60, in=-120] (150:.2) to[out=60, in=-120] (60:.58); \draw[rotate=120, overcross=black!] (-120:.58) to[out=60, in=-120] (150:.2) to[out=60, in=-120] (60:.58);}}
\newcommand{\Tan}[1][1]{\tikz[knot, scale=#1]{\draw[looseness=2.3] (0,1) to[out=-30, in=-30] (0,0.3); \draw[looseness=1.3,overcross=black] (0,0) to[out=70, in=210] (0.7,1);\draw[looseness=2.3,overcross=black] (0,0.3) to[out=150, in=100] (0.6,0);\draw[looseness=0] (-0.4,0) to (0.9,0);\draw[looseness=0] (-0.4,1) to (0.9,1);}}
\newcommand{\Tano}[1][1]{\tikz[knot, scale=#1]{\clip (0.3,0.5) circle(0.515);\draw[looseness=2.3] (0,1) to[out=-30, in=-30] (0,0.3); \draw[looseness=1.3,overcross=black] (0,0) to[out=70, in=210] (0.7,1);\draw[looseness=2.3,overcross=black] (0,0.3) to[out=150, in=100] (0.6,0);\draw(0.3,0.5) circle(0.5);}}
\newcommand{\Tangle}[1][1]{\tikz[knot, scale=#1]{\draw[looseness=0.8,overcross=black] (-0.4,0.3) to[out=-70, in=180] (0,-0.1);\draw[looseness=1.8,overcross=black] (0.4,-0.3) to[out=110, in=70](-0.4,-0.3);
\draw[looseness=0.8,overcross=black] (0.4,0.3) to[out=-110, in=0] (0,-0.1);\draw[looseness=0.8,overcross=black] (0.4,0.3) to[out=-110, in=0] (0,-0.1);\draw(0,0) circle(0.5);}}
\newcommand{\Tanglo}[1][1]{\tikz[knot, scale=#1]{\draw[looseness=0.8,overcross=black] (-0.4,0.3) to[out=-70, in=180] (0,-0.1);\draw[looseness=1.8,overcross=black] (0.4,-0.3) to[out=110, in=70](-0.4,-0.3);\draw[looseness=0.8,overcross=black] (0.4,0.3) to[out=-110, in=0] (0,-0.1);\draw[looseness=0.8,overcross=black] (0.4,0.3) to[out=-110, in=0] (0,-0.1);\draw(0,0) circle(0.5);\fill[white] (0.15,0) circle(0.15);\draw(0.15,0) circle(0.15);}}
\newcommand{\Tanglx}[1][1]{\tikz[knot, scale=#1]{\draw[looseness=0.8] (-0.4,0.3) to[out=-10, in=110] (0.4,-0.3);\draw[looseness=0.8,overcross=black] (0.4,0.3) to[out=-110, in=10] (-0.4,-0.3);\draw(0,0) circle(0.5);}}
\newtheorem{theorem}{Theorem}[section]
\newtheorem{proposition}[theorem]{Proposition}
\newtheorem{lemma}[theorem]{Lemma}
\newtheorem{definition}{Definition}[section]
\newtheorem{example}{Example}[section]
\newtheorem{remark}{Remark}[section]
\newcommand{\im}{{\mathrm{im}\hspace{0.1em}}}
\newcommand{\sgn}{{\mathrm{sgn}\hspace{0.1em}}}
\newcommand{\qdim}{{\mathrm{qdim}\hspace{0.1em}}}
    \newcommand*{\addFileDependency}[1]{
    \typeout{(#1)}
    \@addtofilelist{#1}
    \IfFileExists{#1}{}{\typeout{No file #1.}}
    }
\newcommand{\rightcross}[1]{
  \begin{tikzpicture}[knot/.style={black}]
    \begin{scope}[xshift=5cm]
      \draw[line width=0.8pt] (-#1,#1)-- (#1,-#1);
      \draw[white,double=black,double distance=0.8pt,thick](-#1,-#1)-- (#1,#1);
      \draw[line width=0.4pt](0,0)circle (#1*1.414);
    \end{scope}
  \end{tikzpicture}
}
\newcommand{\arightcross}[1]{
  \begin{tikzpicture}[knot/.style={black}]
    \begin{scope}[xshift=5cm]
      \draw[line width=0.8pt,->]  (#1,-#1)--(-#1,#1);
      \draw[white,double=black,double distance=0.8pt,thick](-#1,-#1)-- (#1,#1);
      \draw[line width=0.8pt,->] (#1*0.5,#1*0.5)--(#1,#1);
      \draw[line width=0.4pt](0,0)circle (#1*1.414);
    \end{scope}
  \end{tikzpicture}
}
\newcommand{\leftcross}[1]{
  \begin{tikzpicture}[knot/.style={black}]
    \begin{scope}[xshift=5cm]
      \draw[line width=0.8pt] (-#1,-#1)-- (#1,#1);
      \draw[white,double=black,double distance=0.8pt,thick](-#1,#1)-- (#1,-#1);
      \draw[line width=0.4pt](0,0)circle (#1*1.414);
    \end{scope}
  \end{tikzpicture}
}
\newcommand{\aleftcross}[1]{
  \begin{tikzpicture}[knot/.style={black}]
    \begin{scope}[xshift=5cm]
      \draw[line width=0.8pt,->] (-#1,-#1)-- (#1,#1);
      \draw[white,double=black,double distance=0.8pt,thick](-#1,#1)-- (#1,-#1);
      \draw[line width=0.8pt,->] (-#1*0.5,#1*0.5)-- (-#1,#1);
      \draw[line width=0.4pt](0,0)circle (#1*1.414);
    \end{scope}
  \end{tikzpicture}
}
\newcommand{\udarc}[1]{
  \begin{tikzpicture}
    \draw[line width=0.8pt] (0,0) arc (140:40:#1cm);
    \draw[line width=0.8pt] (0,#1*1.2) arc (220:320:#1cm);
     \draw[line width=0.4pt](#1*0.766,#1*0.6)circle (#1);
  \end{tikzpicture}
}
\newcommand{\cudarc}[1]{
  \begin{tikzpicture}
    \draw[line width=0.8pt] (0,0) arc (140:40:#1cm);
    \draw[line width=0.8pt] (0,#1*1.2) arc (220:320:#1cm);
    \draw[line width=0.4pt] (-#1*0.333, -#1*0.5) -- (-#1*0.333 , #1*1.7)--(#1*1.866 , #1*1.7)--(#1*1.866 , -#1*0.5)--(-#1*0.333, -#1*0.5);
  \end{tikzpicture}
}
\newcommand{\cudarcc}[1]{
  \begin{tikzpicture}
    \draw[line width=0.8pt] (0,0) arc (140:40:#1cm);
    \draw[line width=0.8pt] (0,#1*1.2) arc (220:320:#1cm);
    \draw[line width=0.4pt] (-#1*0.233, -#1*0.5) -- (-#1*0.233 , #1*1.7)--(#1*3.5 , #1*1.7)--(#1*3.5 , -#1*0.5)--(-#1*0.233, -#1*0.5);
    \draw[line width=0.8pt] (#1*1.532,0) arc[start angle=-140, end angle=140, radius=#1*0.933];
  \end{tikzpicture}
}
\newcommand{\lcrossingarc}[1]{
  \begin{tikzpicture}
    \draw[line width=0.8pt] (#1,-#1) arc (-135:135:#1 *1.414cm);
    \draw[line width=0.8pt] (-#1*1.2,-#1*1.2)-- (#1,#1);
    \draw[white,double=black,double distance=0.8pt,thick](-#1*1.2,#1*1.2)-- (#1,-#1);
    \draw[line width=0.8pt](#1 *1.2,0)circle(#1 *2.68);
  \end{tikzpicture}
}
\newcommand{\rcrossingarc}[1]{
  \begin{tikzpicture}
    \draw[line width=0.8pt] (#1,-#1) arc (-135:135:#1 *1.414cm);
    \draw[line width=0.8pt](-#1*1.2,#1*1.2)-- (#1,-#1);
    \draw[white,double=black,double distance=0.8pt,thick] (-#1*1.2,-#1*1.2)-- (#1,#1);
    \draw[line width=0.8pt](#1 *1.2,0)circle(#1 *2.68);
  \end{tikzpicture}
}
\newcommand{\caarc}[1]{
  \begin{tikzpicture}
    \draw[line width=0.8pt] (0,0) arc (40:140:#1cm);
    \draw[line width=0.8pt] (0,#1*1.2) arc (320:220:#1cm);
    \draw[line width=0.4pt] (#1*0.233, -#1*0.5) -- (#1*0.233 , #1*1.7)--(-#1*4.727 , #1*1.7)--(-#1*4.727 , -#1*0.5)--(#1*0.233, -#1*0.5);
    \draw[line width=0.8pt] (-#1*1.532,0) arc[start angle=320, end angle=40, radius=#1*0.933];
    \draw[line width=0.8pt] (-#1*3.994,0) arc[start angle=-40, end angle=40, radius=#1*0.933];
  \end{tikzpicture}
}
\newcommand{\cudarco}[1]{
  \begin{tikzpicture}
    \draw[line width=0.4pt] (-#1*0.233, -#1*0.5) -- (-#1*0.233 , #1*1.7)--(#1*3.5 , #1*1.7)--(#1*3.5 , -#1*0.5)--(-#1*0.233, -#1*0.5);
    \draw[line width=0.8pt] (#1* 2.247,#1* 0.6)circle (#1*0.933);
    \draw[line width=0.8pt] (#1*0.5,0) arc[start angle=-40, end angle=40, radius=#1*0.933];
  \end{tikzpicture}
}
\newcommand{\caoa}[1]{
  \begin{tikzpicture}
    \draw[line width=0.4pt] (-#1*0.233, -#1*0.5) -- (-#1*0.233 , #1*1.7)--(#1*4.727 , #1*1.7)--(#1*4.727 , -#1*0.5)--(-#1*0.233, -#1*0.5);
    \draw[line width=0.8pt] (#1* 2.247,#1* 0.6)circle (#1*0.933);
    \draw[line width=0.8pt] (#1*0.5,0) arc[start angle=-40, end angle=40, radius=#1*0.933];
    \draw[line width=0.8pt] (#1*3.994,0) arc[start angle=220, end angle=140, radius=#1*0.933];
  \end{tikzpicture}
}
\newcommand{\ccircle}[1]{
  \begin{tikzpicture}
    \draw[line width=0.4pt] (-#1*1.233, -#1*0.5) -- (-#1*1.233 , #1*1.7)--(#1*3.5 , #1*1.7)--(#1*3.5 , -#1*0.5)--(-#1*1.233, -#1*0.5);
    \draw[line width=0.8pt] (#1* 2.247,#1* 0.6)circle (#1*0.933);
    \draw[line width=0.8pt] (#1* 0,#1* 0.6)circle (#1*0.933);
  \end{tikzpicture}
}
\newcommand{\cooo}[1]{
  \begin{tikzpicture}
    \draw[line width=0.4pt] (-#1*1.233, -#1*0.5) -- (-#1*1.233 , #1*1.7)--(#1*5.727 , #1*1.7)--(#1*5.727 , -#1*0.5)--(-#1*1.233, -#1*0.5);
    \draw[line width=0.8pt] (#1* 2.247,#1* 0.6)circle (#1*0.933);
    \draw[line width=0.8pt] (#1* 0,#1* 0.6)circle (#1*0.933);
    \draw[line width=0.8pt] (#1* 4.494,#1* 0.6)circle (#1*0.933);
  \end{tikzpicture}
}
\newcommand{\cudarcoo}[1]{
  \begin{tikzpicture}
    \draw[line width=0.4pt] (-#1*0.233, -#1*0.5) -- (-#1*0.233 , #1*1.7)--(#1*5.8 , #1*1.7)--(#1*5.8 , -#1*0.5)--(-#1*0.233, -#1*0.5);
    \draw[line width=0.8pt] (#1* 2.247,#1* 0.6)circle (#1*0.933);
    \draw[line width=0.8pt] (#1*0.5,0) arc[start angle=-40, end angle=40, radius=#1*0.933];
    \draw[line width=0.8pt] (#1* 4.5,#1* 0.6)circle (#1*0.933);
  \end{tikzpicture}
}
\newcommand{\cscircle}[1]{
  \begin{tikzpicture}
    \draw[line width=0.4pt] (#1, -#1*0.5) -- (#1 , #1*1.7)--(#1*3.5 , #1*1.7)--(#1*3.5 , -#1*0.5)--(#1, -#1*0.5);
    \draw[line width=0.8pt] (#1* 2.247,#1* 0.6)circle (#1*0.933);
  \end{tikzpicture}
}
\newcommand{\saddle}[1]{
  \begin{tikzpicture}
    \draw[line width=0.8pt] (0,0) arc (140:40:#1cm);
    \draw[line width=0.8pt] (#1 *0.766,#1 *0.3572) -- (#1 *0.766, #1 *0.8428);
    \draw[line width=0.8pt] (0,#1*1.2) arc (220:320:#1cm);
    \draw[line width=0.4pt](#1*0.766,#1*0.6)circle (#1);
  \end{tikzpicture}
}
\newcommand{\lrarc}[1]{
  \begin{tikzpicture}
    \draw[line width=0.8pt] (#1*1.2,0) arc (130:230:#1cm);
    \draw[line width=0.8pt] (0,0) arc (50:-50:#1cm);
    \draw[line width=0.4pt](#1*0.6,-#1*0.766)circle (#1);
  \end{tikzpicture}
}
\newcommand{\clrarc}[1]{
  \begin{tikzpicture}
    \draw[line width=0.8pt] (#1*1.2,0) arc (130:230:#1cm);
    \draw[line width=0.8pt] (0,0) arc (50:-50:#1cm);
    \draw[line width=0.4pt] (-#1*0.5, -#1*1.866) -- (-#1*0.5 , #1*0.333)--(#1*1.7 , #1*0.333)--(#1*1.7 , -#1*1.866)--(-#1*0.5, -#1*1.866);
  \end{tikzpicture}
}
\newcommand{\hsaddle}[1]{
  \begin{tikzpicture}
    \draw[line width=0.8pt] (#1*1.2,0) arc (130:230:#1cm);
    \draw[line width=0.8pt] ( #1 * 0.3572, - #1 * 0.7660)-- (#1 * 0.2 + #1 * 0.6428, - #1 * 0.7660);
    \draw[line width=0.8pt] (0,0) arc (50:-50:#1cm);
    \draw[line width=0.4pt](#1*0.6,-#1*0.766)circle (#1);
  \end{tikzpicture}
}
\newcommand{\tubeab}{\raisebox{-1cm}{\includegraphics[height=2cm]{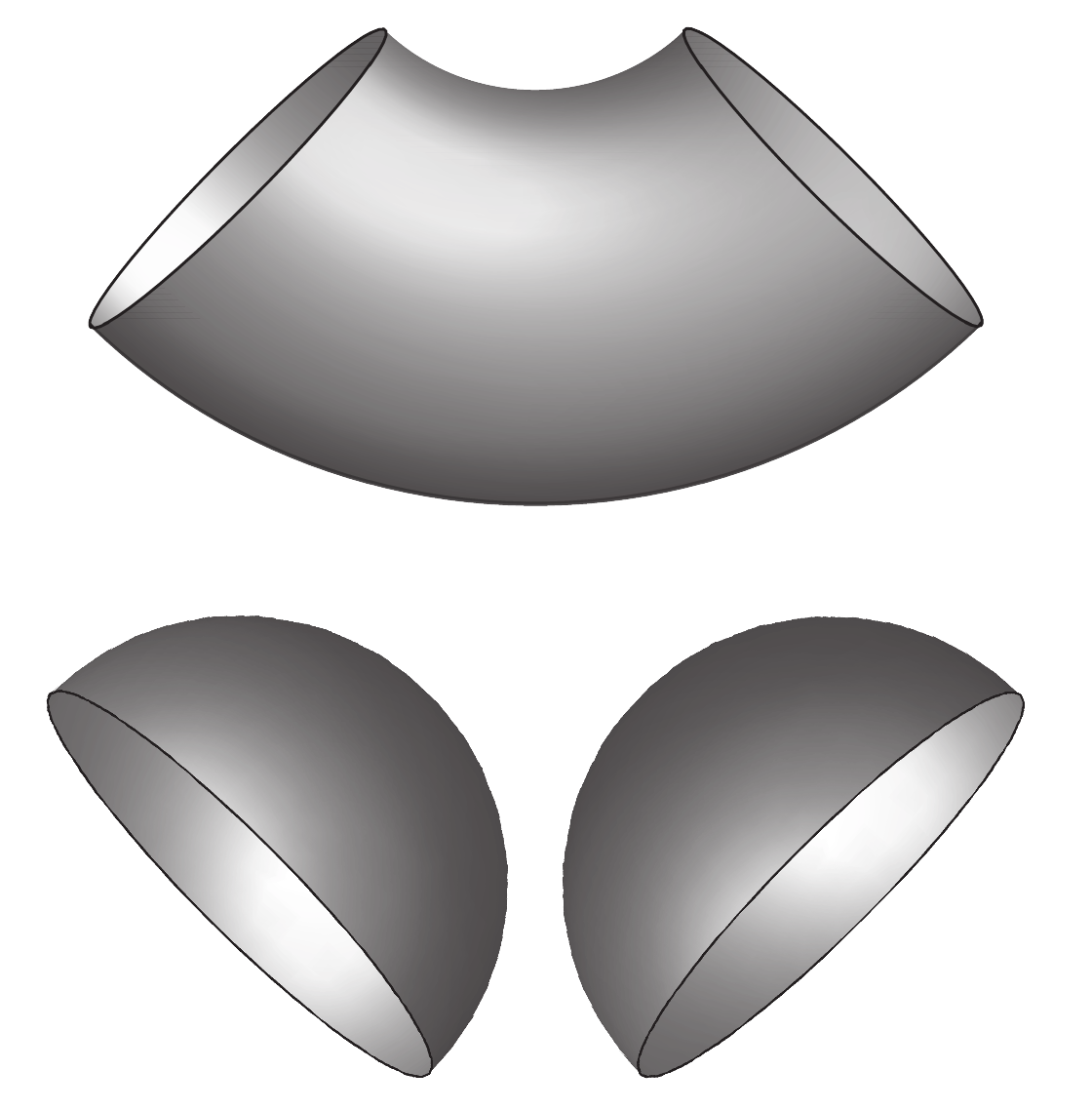}}}
\newcommand{\tubecd}{\raisebox{-1cm}{\includegraphics[height=2cm]{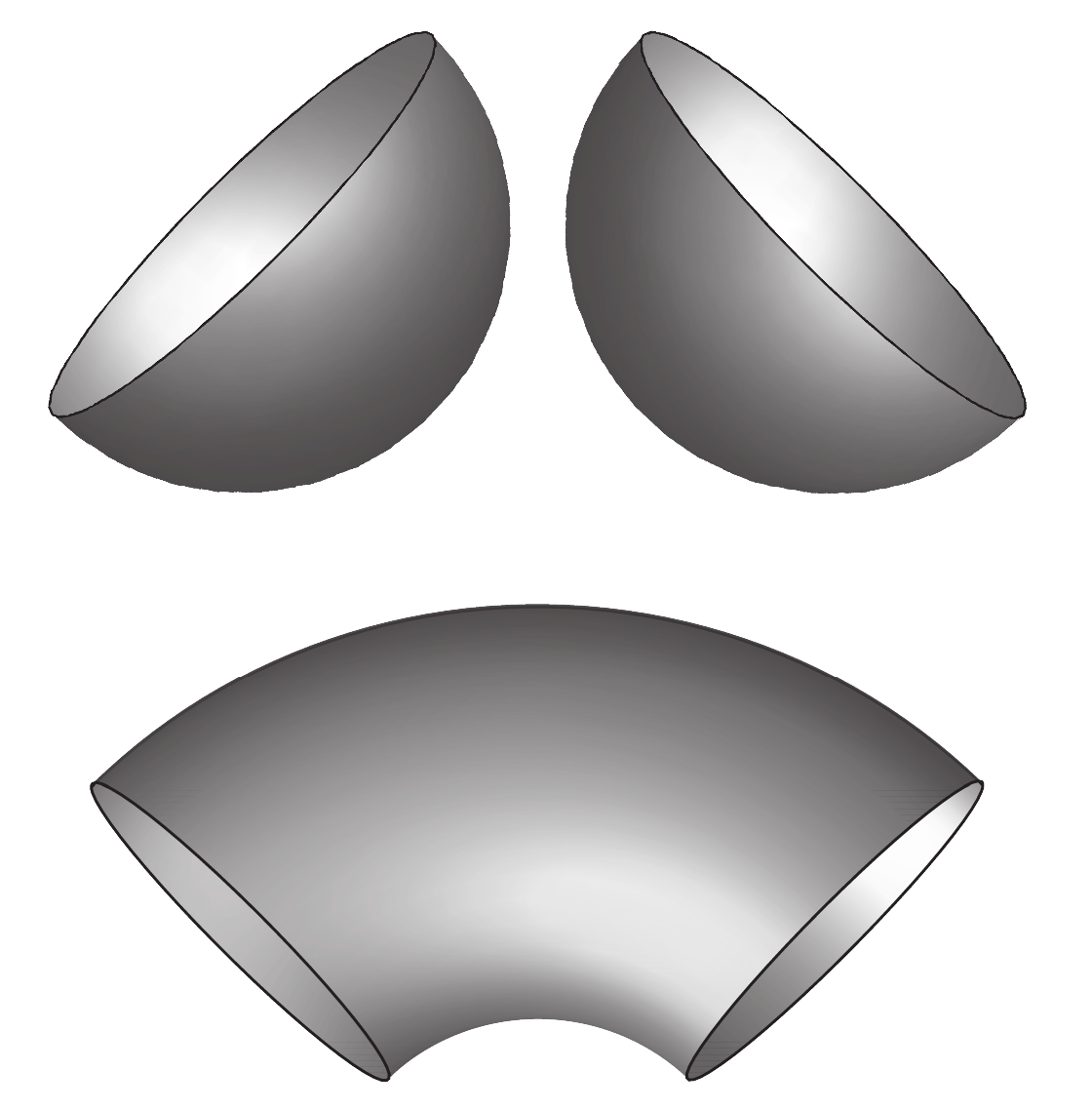}}}
\newcommand{\tubeac}{\raisebox{-1cm}{\includegraphics[height=2cm]{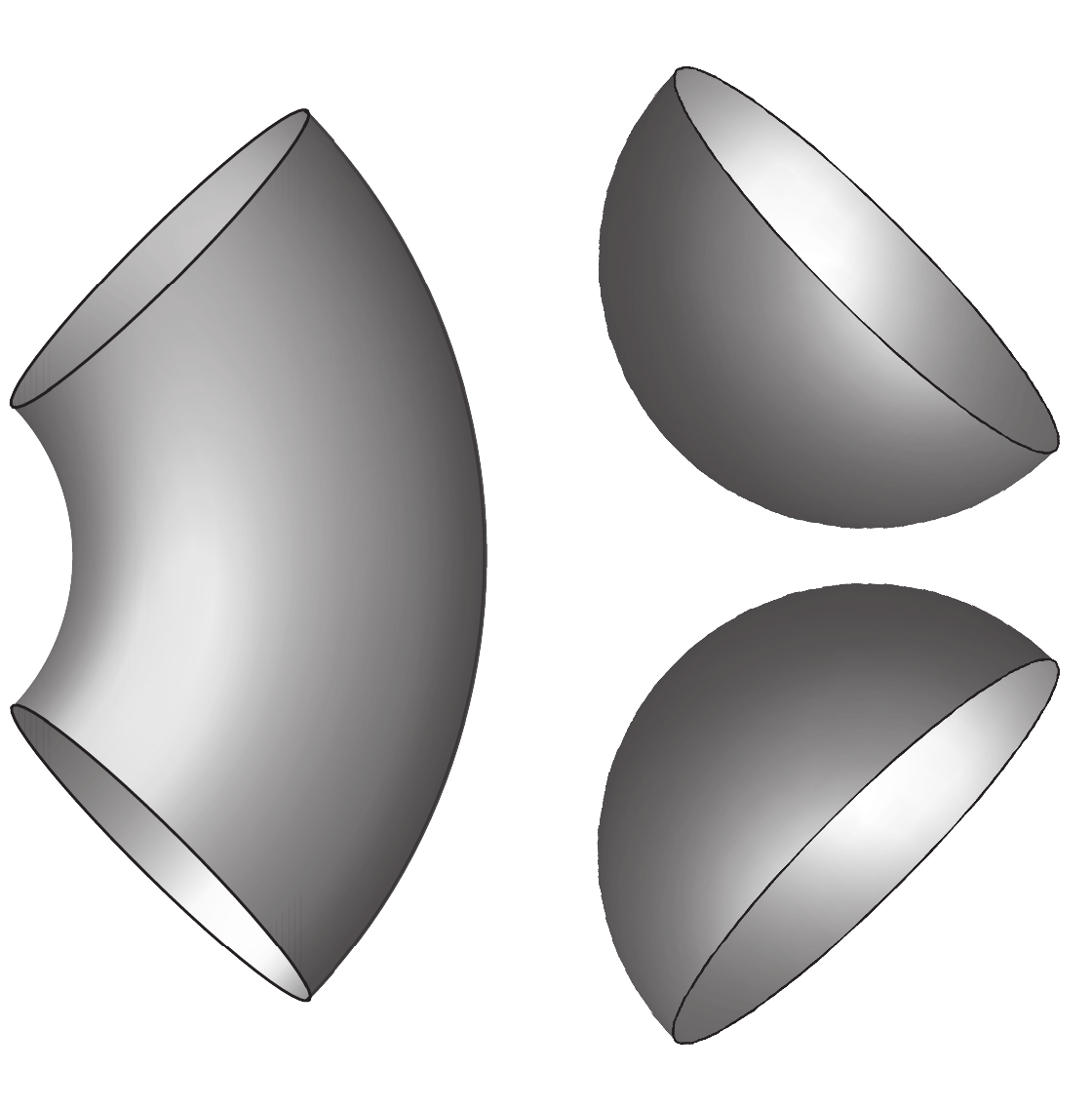}}}
\newcommand{\tubebd}{\raisebox{-1cm}{\includegraphics[height=2cm]{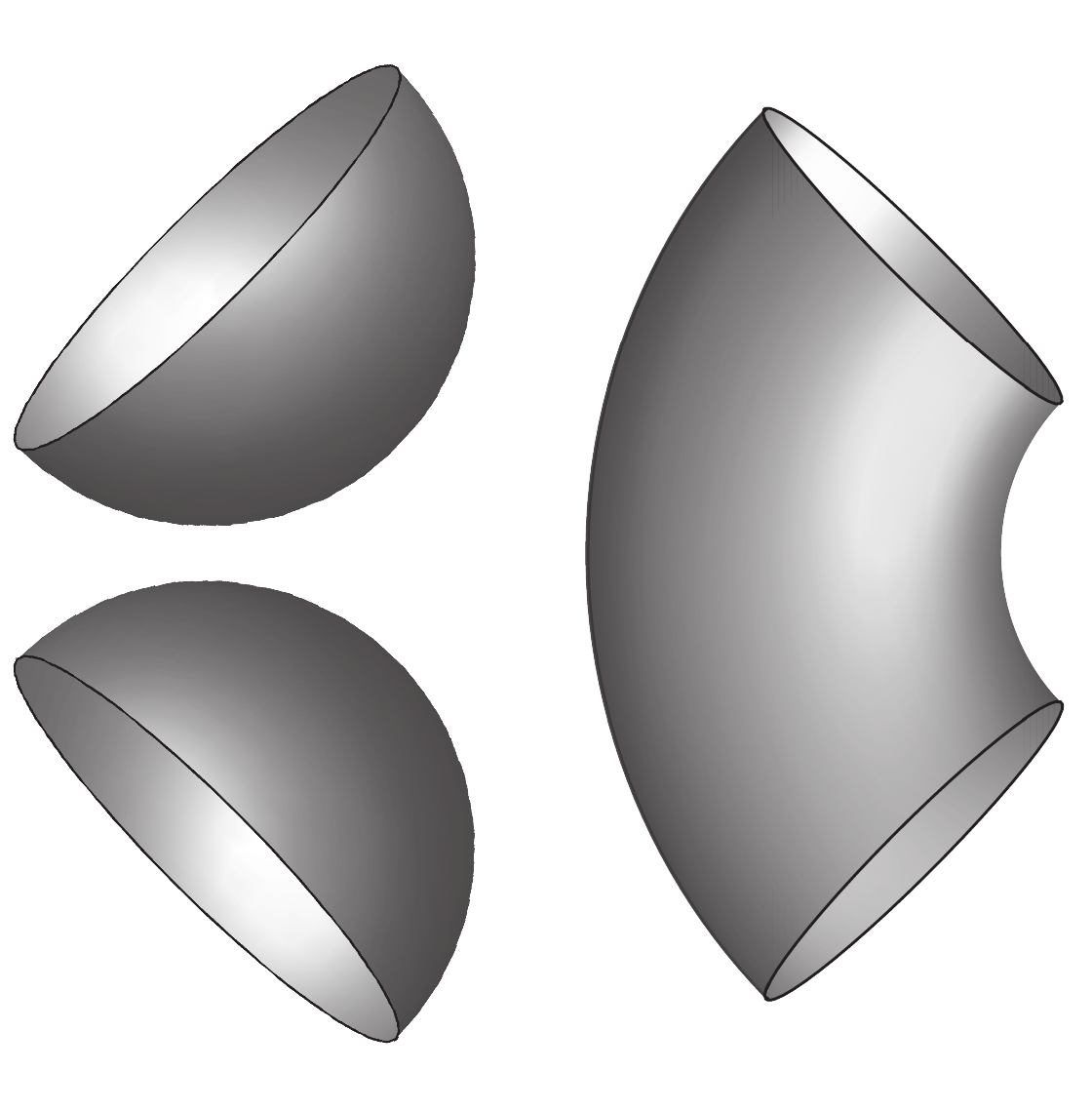}}}
\title{Persistent Khovanov homology of tangles}
\author[1,2]{Jian Liu}
\author[2]{Li Shen}
\author[2,3,4]{Guo-Wei Wei \thanks{Corresponding author: weig@msu.edu}}
\affil[1]{Mathematical Science Research Center, Chongqing University of Technology, Chongqing 400054, China}
\affil[2]{Department of Mathematics, Michigan State University, MI 48824, USA}
\affil[3]{Department of Electrical and Computer Engineering, Michigan State University, MI 48824, USA}
\affil[4]{Department of Biochemistry and Molecular Biology, Michigan State University, MI 48824, USA}
    \renewcommand*{\@fnsymbol}[1]{\ensuremath{\ifcase#1\or \dagger\or *\or *\or
   \mathsection\or \else\@ctrerr\fi}}
\date{}
\begin{document}
        \maketitle

    \paragraph{Abstract}

    Knot data analysis (KDA), which studies data with curve-type structures such as knots, links, and tangles, has emerging as a promising geometric topology approach in data science. While evolutionary Khovanov homology has been developed to analyze the global persistent topological features of links, it has limitations in capturing the local topological characteristics of knots and links. To address this challenge, we introduce the persistent Khovanov homology of tangles, providing a new mathematical framework for characterizing local features in curve-type data. While tangle homology is inherently abstract, we provide a concrete functor which maps the category of tangles to the category of modules, enabling the computation of tangle homology. Additionally, we employ planar algebra to construct a category of tangles without invoking  fixed boundaries, thereby giving rise to a persistent Khovanov homology functor that enables practical applications. This framework provides a theoretical foundation and practical strategies for the topological analysis of curve-type data.

    \paragraph{Keywords}
     Tangle, cobordism, Khovanov homology, knot data analysis, planar algebra.

\footnotetext[1]
{ {\bf 2020 Mathematics Subject Classification.}  	Primary  55N31; Secondary 57M10,  57K18.
}

    \newpage
    \tableofcontents
    \newpage

\section{Introduction}\label{section:introduction}

Curve-type data, including knots, links, and open curves, are ubiquitous in nature, such as river paths, tree rings, DNA packaging, RNA knots, and human vascular networks. Topological analysis of curve-type data presents unique challenges that differ from those encountered in the topological analysis of point cloud data \cite{carlsson2009topology} and data on manifolds \cite{chen2021evolutionary}. Despite significant advancements in geometric topology and knot theory, their application to real-world curve-type data remains largely unexplored.

Khovanov homology, originally developed as a categorification of the Jones polynomial for links, has become a significant knot invariant \cite{bar2002khovanov,khovanov2000categorification}. This framework was later extended to tangles, with distinct constructions provided by Khovanov \cite{khovanov2002functor} and Bar-Natan \cite{bar2005khovanov}. Khovanov homology has been established as a tangle invariant, making it a valuable tool for studying tangles. However, Khovanov homology is not directly suitable for the topological analysis of curve-type data.

Similarly, homology, a fundamental concept in algebraic topology, has limited applicability for topological data analysis. In contrast, persistent homology, a successful algebraic topology approach, has achieved significant success in data science by revealing the underlying topological and geometric structures of point cloud data \cite{carlsson2009topology,edelsbrunner2002topological}. The success of persistent homology is attributed to its multiscale analysis, which creates simplicial complexes at different scales to reveal the "shape" of data. However, persistent homology cannot capture the homotopic shape evolution of data. This limitation was addressed by persistent spectral graph theory \cite{wang2020persistent}, also known as persistent Laplacian, which has demonstrated significant impact in applications \cite{chen2022persistent}. Similarly, persistent Hodge Laplacian, derived from differential geometry, algebraic topology, and multiscale analysis, was introduced for manifold topological analysis \cite{chen2021evolutionary}.

However, these topological methods cannot be directly applied to unveil the multiscale topological features in curve-type data such as knots, links, and tangles, which are mathematical objects in geometric topology. Recently, the multiscale Gauss link integral was introduced in knot data analysis \cite{shen2024knot}, providing an effective framework for capturing the multiscale information of curve-type data. Nonetheless, multiscale Gauss link integral does not preserve topological invariants at small scales. More recently, evolutionary Khovanov homology was introduced to analyze the multi-faceted topological properties of links \cite{shen2024evolutionary}. It is an extension of Khovanov homology for multiscale analysis. Despite that, when studying the Khovanov homology of tangles, the construction based on topological quantum field theory encounters significant challenges. Moreover, the local persistence of tangles does not guarantee that the boundaries of the tangles remain fixed, making it difficult to construct computational tools for existing categories of tangles with fixed boundaries. These challenges call for new strategies to construct tangle persistence for practical applications.

In this work, we introduce the persistent Khovanov homology (PKH) of tangles and present a feasible construction for the practical applications of PKH. Our work overcomes two main technical hurdles. First, the Khovanov complexes of tangles in \cite{bar2005khovanov} are constructed over the additive closure of the category of cobordism (denoted as $\mathrm{Mat}(\mathbf{k}\mathcal{C}ob^{3}_{/l}(B))$, see Section \ref{section:cobordism}), which does not allow for an explicit definition of homology and makes the computation of chain complexes inconvenient. To resolve this problem, we construct a functor that translates the study of cochain complexes of tangles over category $\mathrm{Mat}(\mathbf{k}\mathcal{C}ob^{3}_{/l}(B))$ to the study of cochain complexes of $\mathbf{k}$-modules, thereby facilitating the definition and computation of Khovanov homology in a general setting. Second, the known category of tangles typically requires that tangles have fixed boundaries, which is impractical in applications because maintaining a fixed boundary across a filtration of tangles is rarely feasible. This issue makes it difficult to define the persistence for spatial tangles. To overcome this hurdler, we introduce a new category of tangles based on planar algebra, where morphisms between tangles are interpreted as the inclusion of 1-dimensional manifolds, aligning with the intuitive notion of persistence. By resolving these obstacles, we develop a theory of PKH suitable for practical applications.

In the next section, we review the fundamental concepts related to tangles and the Khovanov homology of tangles. In Section \ref{section:persistence_tangle}, we define the persistent Khovanov homology and introduce a constructive functor on the category of tangles. In the final section, planar tangles are utilized to construct the persistent Khovanov homology of tangles.

\section{Tangle and Khovanov homology}\label{sec:Tangle}

In this section, we review the fundamental concepts and results related to tangles.
We refer to \cite{le1995representation} for basic concepts related to tangles. For the classical theory of Khovanov homology of tangles, we refer to \cite{bar2005khovanov} and \cite{khovanov2000categorification}. Additionally, \cite{khovanov2002functor} explores the homology of $(1,1)$-tangles. Our approach in this work builds upon the relevant theory of the Khovanov homology of tangles as presented in \cite{bar2005khovanov}.

\subsection{Tangle}

A \emph{tangle} is an embedding of finitely many arcs and circles into $\mathbb{R}^2 \times [0,1]$. More precisely, a tangle $T$ is defined as a 1-dimensional compact oriented piecewise smooth submanifold of $\mathbb{R}^3$ lying between two horizontal planes, with every boundary point of $T$ lying on both the top and bottom planes. Another way to describe a tangle is as an embedding of finitely many arcs and circles into a 3-dimensional ball $B^3$, with the ends of the arcs required to lie on the boundary $\partial B^3$ of $B^3$. From now on, we will consider tangles embedded in the 3-dimensional ball $B^3$.
\begin{figure}[htb!]
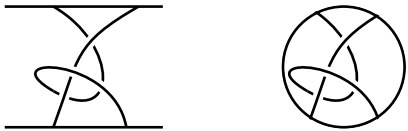

  \centering
  \Tan[1.6]\qquad\qquad\Tano[1.6]\\
  \caption{The tangle representations of a tangle in $\mathbb{R}^2 \times [0,1]$ and $B^3$.}\label{figure:tangle_representation}
\end{figure}

Two tangles $T$ and $T'$ are \emph{isotopic} if there exists a continuous map $H: B^3 \times [0,1] \to B^3$ such that $H(-,0)$ is the identity map, $H(-,1)$ maps $T$ to $T'$, and each map $H(-,t)$ is a homeomorphism that restricts to the identity map on $\partial B^3$.

A \emph{tangle diagram} is a projection $T\to B^2$ of a tangle onto a maximal disk $B^{2}$ in $B^3$ such that it is injective everywhere  except at a finite number of crossing points, which are the projections of only two points of the tangle. A tangle diagram can be seen as a generalization of the concepts of knot diagrams and link diagrams. Two tangle diagrams are equivalent if they are related by a series of Reidemeister moves.
\begin{figure}[htb!]
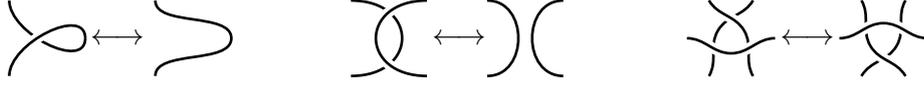

  \centering
  \RIa $\longleftrightarrow$ \RIb[yscale=-1]\qquad\qquad \RIIa$\longleftrightarrow$\RIIb\qquad\qquad \RIIIa$\longleftrightarrow$\RIIIa[rotate=180]
  \caption{The three types of Reidemeister moves.}\label{figure:moves}
\end{figure}

From now on, unless otherwise specified, the tangles considered will always refer to tangle diagrams. For a tangle $T$, we denote the set of crossings of $T$ by $\mathcal{X}(T)$. A crossing of the form $\raisebox{-0.15cm}{\rightcross{0.2}}$ is called an overcrossing, while a crossing of the form $\raisebox{-0.15cm}{\leftcross{0.2}}$ is an undercrossing. Each crossing has a smoothing resolution: $\raisebox{-0.15cm}{\rightcross{0.2}} \Rightarrow \raisebox{-0.15cm}{\udarc{0.3}} + \raisebox{-0.15cm}{\lrarc{0.3}}$ or $\raisebox{-0.15cm}{\leftcross{0.2}} \Rightarrow \raisebox{-0.15cm}{\udarc{0.3}} + \raisebox{-0.15cm}{\lrarc{0.3}}$. Here, $\raisebox{-0.15cm}{\udarc{0.3}}$, called the 0-smoothing, is the tangle obtained by locally changing a crossing into two opposing arcs, one above the other. Similarly, $\raisebox{-0.15cm}{\lrarc{0.3}}$, called the 1-smoothing, is obtained by locally changing a crossing into two opposing arcs, one to the left and one to the right. In this work, the 0-smoothings and 1-smoothings are always conducted on the undercrossing $\raisebox{-0.15cm}{\leftcross{0.2}}$.
Let $n = |\mathcal{X}(T)|$ be the number of crossings of $T$. Then there are $2^n$ states of the smoothing resolution of $T$. The $2^n$ states form a state cube $\{0,1\}^{n}$. Each vertex represents a state of the smoothing resolution and can be described by a sequence $(s_{i})_{0\leq i\leq n}\in \{0,1\}^{n}$ of 0s and 1s of length $n$. Each edge represents two state sequences that differ in exactly one position. For an oriented tangle, we have the right-handed crossing $\raisebox{-0.15cm}{\arightcross{0.2}}$ and the left-handed crossing $\raisebox{-0.15cm}{\aleftcross{0.2}}$. We always assign the symbol $+$ to the right-handed crossing and the symbol $-$ to the left-handed crossing. Let $n_{+}$ denote the number of right-handed crossings, and let $n_{-}$ denote the number of left-handed crossings.

The study of the category of tangles involves the 2-category structure of tangles, which has been developed in   \cite{baez2003higher,fischer19942,langford19972}. Roughly speaking, this category has the boundaries of tangles as objects, tangles as 1-morphisms, and cobordisms connecting tangles as 2-morphisms. The 2-morphisms, depicted by movies, are generated by a family of moves as detailed in \cite{carter1993reidemeister, roseman1998reidemeister}. In particular, the edges of the state cube can be characterized by a cobordism between the smoothings of a tangle.

\subsection{Cobordism and bracket complex}\label{section:cobordism}

Let $M$ and $N$ be two compact manifolds without boundary. A \emph{cobordism} $\Sigma$ between $M$ and $N$ is a compact manifold with boundary such that its boundary is the disjoint of $M$ and $N$, $\partial \Sigma=M\sqcup N$.

Given a tangle $T$, recall that we can obtain a state cube $\{0,1\}^{n}$.  Each vertex of the cube represents a tangle with the boundary $\partial T$. Moreover, there is a cobordism connecting the tangles corresponding to the end vertices of an edge of the state cube.
Considering such tangles corresponding to some smoothing of $T$ as objects, and the cobordisms between these tangles as morphisms, we obtain a category $\mathcal{C}ube(T)$.
Generally, for a finite set of points $B$ on a circle, we have a category $\mathcal{C}ob^3(B)$, whose objects are the tangles corresponding to some smoothing of a tangle, and whose morphisms are the cobordisms between such tangles. For a fixed tangle $T$, the category $\mathcal{C}ube(T)$ is a subcategory of $\mathcal{C}ob^3(\partial T)$.

Let $\mathbf{k}$ be a commutative ring with a unit. One can extend $\mathcal{C}ob^3(B)$ to a pre-additive category $\mathbf{k}\mathcal{C}ob^3(B)$ as follows. The objects in $\mathbf{k}\mathcal{C}ob^3(B)$ are the same as the objects in $\mathcal{C}ob^3(B)$, and the morphisms in $\mathbf{k}\mathcal{C}ob^3(B)$ are linear combinations of morphisms in $\mathcal{C}ob^3(B)$. That is, the set $\mathrm{Hom}_{\mathbf{k}\mathcal{C}ob^3(B)}(T,T')$ is a $\mathbf{k}$-module generated by the morphisms in the set $\mathrm{Hom}_{\mathcal{C}ob^3(B)}(T,T')$ of morphisms from $T$ to $T'$ for any objects $T$ and $T'$ in $\mathcal{C}ob^3(B)$.

\begin{definition}
For a pre-additive category $\mathcal{C}$, we can define a category $\mathrm{Mat}_{\mathbf{k}}(\mathcal{C})$ with:
\begin{itemize}
  \item Objects of the form $\mathcal{O} = \bigoplus\limits_{i=1}^{m} \mathcal{O}_{i}$ for $\mathcal{O}_{i} \in \mathcal{C}$.
  \item Morphisms that are matrices of the form $f = (f_{ij})_{i,j} : \bigoplus\limits_{i=1}^{m} \mathcal{O}_{i} \to \bigoplus\limits_{j=1}^{k} \mathcal{O}_{j}'$, where $f_{ij} : \mathcal{O}_{i} \to \mathcal{O}_{j}'$ are morphisms in $\mathcal{C}$ for $1 \leq i \leq m$ and $1 \leq j \leq k$.
  \item Composition of morphisms given by matrix multiplication.
\end{itemize}
\end{definition}
The construction $\mathrm{Mat}_{\mathbf{k}}(\mathcal{C})$ is an additive category, which is the additive closure of the category $\mathcal{C}$. Furthermore, one can define a cochain complex in an additive category.
\begin{definition}
Let $\mathcal{C}$ be an additive category. The category $\mathbf{Ch}^{\bullet}(\mathcal{C})$ of cochain complexes over $\mathcal{C}$ is defined as follows. Its objects are of the form
\begin{equation*}
  \xymatrix{
  \cdots\ar@{->}[r]&\Omega^{r-1}\ar@{->}[r]^{d^{r-1}}&\Omega^{r}\ar@{->}[r]^{d^{r}}&\Omega^{r+1}\ar@{->}[r]&\cdots
  }
\end{equation*}
such that $d^{r+1}\circ d^{r}=0$ for any $r$, and its morphisms are of the form $f^{r}:(\Omega^{r}_{a},d_{a})\to (\Omega^{r}_{b},d_{b})$ such that $f^{r-1}\circ d_{a}=d_{b}\circ f^{r}$ for any $r$.
\end{definition}

Let $T$ be a tangle with $n$ crossings. The state cube associated with $T$ has vertices indexed by states $s = (s_i)_{0 \leq i \leq n} \in \{0,1\}^{n}$, where each $s_i$ represents a smoothing choice at the $i$-th crossing of the tangle. For a given state $s$, we denote $\ell(s) = \sum\limits_{i=1}^{n} s_i$. Next, for the smoothing tangle $T_s$ corresponding to state $s$, we assign a height function $h(s) = \ell(s) - n_{-}$, where $n_{-}$ is the number of left-handed crossings in the original tangle $T$. This height measures the relative position of each smoothing state in the cube. Recall that the category $\mathcal{C}ube(T)$ is a subcategory of $\mathcal{C}ob^3(\partial T)$. We have a graded object in $\mathrm{Mat}(\mathbf{k}\mathcal{C}ob^3(B))$ given by
\begin{equation*}
    \xymatrix{
  \cdots\ar[r]&[[T]]^{k-1}\ar[r]^-{d^{k-1}}&[[T]]^{k}\ar[r]^-{d^{k}}&[[T]]^{k+1}\ar[r]^-{d^{k+1}}&\cdots,
  }
\end{equation*}
where each graded piece $[[T]]^{k} = \bigoplus\limits_{\ell(s) = k} T_s$ is a direct sum over all smoothing tangles $T_s$ whose length $\ell(s) = k$. The morphism $d^{k}$ is given by
\begin{equation*}
  d^k = \sum_{\xi} (-1)^{\mathrm{sgn}(\xi)} d_{\xi} : [[T]]^k \to [[T]]^{k+1},
\end{equation*}
where the sum is over all edges $\xi = (\xi_1, \dots, \xi_{i-1}, \star, \xi_{i+1}, \dots, \xi_{|\mathcal{X}(T)|}) \in \{0, 1, \star\}^{|\mathcal{X}(T)|}$ in the state cube that connect a state $s$ with a neighboring state $s'$ that differs by one position. Here, $\xi_{j} \in \{0, 1\}$ for $j\neq i$ and $\star$ indicates an edge connecting $0$ to $1$.
The map $d_{\xi}$ denotes the cobordism morphism between the smoothing tangles $T_s$ and $T_{s'}$. The sign $\mathrm{sgn}(\xi)$ is determined by the number of $1$s in $\xi$ that appear before the first $\star$.

Note that the cube $\mathcal{C}ube(T)$ is anti-commutative. This means that for each face of the cube, represented by the following diagram:
\begin{equation*}
  \xymatrix{
  T_{s}\ar[r]^{d_{\xi}}\ar[d]_{d_{\eta}}&T_{s'}\ar[d]^{d_{\eta'}}\\
  T_{\tilde{s}}\ar[r]^{d_{\tilde{\xi}}}&T_{\tilde{s}'}
  }
\end{equation*}
we have the anti-commutativity relation $d_{\widetilde{\xi}} \circ d_{\eta} = -d_{\eta'} \circ d_{\xi}$. This condition ensures that the composition of differentials along the edges of each face of the state cube satisfies the appropriate signs, maintaining the structure of a cochain complex.

\begin{proposition}[\text{\cite[Proposition 3.4]{bar2005khovanov}}]
The construction $([[T]]^{\ast},d^{\ast})$ above is a cochain complex over $\mathrm{Mat}(\mathbf{k}\mathcal{C}ob^3(\partial T))$.
\end{proposition}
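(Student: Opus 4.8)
The plan is to verify that $([[T]]^\ast,d^\ast)$ is indeed an object of $\mathbf{Ch}^\bullet(\mathrm{Mat}(\mathbf{k}\mathcal{C}ob^3(\partial T)))$, which amounts to two things: first, that each $d^k$ is a well-defined morphism in $\mathrm{Mat}(\mathbf{k}\mathcal{C}ob^3(\partial T))$, and second, that $d^{k+1}\circ d^k = 0$. The first point is essentially bookkeeping: each $[[T]]^k = \bigoplus_{h(s)=k} T_s$ is a finite direct sum of objects of $\mathcal{C}ob^3(\partial T)$, hence an object of $\mathrm{Mat}(\mathbf{k}\mathcal{C}ob^3(\partial T))$, and $d^k = \sum_\xi (-1)^{\mathrm{sgn}(\xi)} d_\xi$ is by construction a matrix whose $(s,s')$-entry is $\pm d_\xi$ when $s,s'$ are joined by an edge $\xi$ (raising height by one) and $0$ otherwise; since $\mathbf{k}\mathcal{C}ob^3(\partial T)$ is pre-additive these entries are legitimate morphisms, so $d^k$ is a morphism in the additive closure.

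The substantive part is $d^{k+1}\circ d^k = 0$. First I would compute the $(s,s'')$-entry of this composite for states with $h(s'')=h(s)+2$: by matrix multiplication it is a sum over intermediate states $s'$ with $h(s')=h(s)+1$ of the composites of the corresponding signed cobordisms. The only nonzero contributions come from pairs $(s',s'')$ where $s\to s'\to s''$ flips two distinct coordinates, say coordinates $i$ and $j$ with $i<j$; there are exactly two such intermediate states, namely the one flipping $i$ first then $j$, and the one flipping $j$ first then $i$. These two paths are precisely the two sides of a square face of the state cube, so the entry equals $(-1)^{\mathrm{sgn}(\xi_1)+\mathrm{sgn}(\xi_1')} d_{\xi_1'}\circ d_{\xi_1} + (-1)^{\mathrm{sgn}(\xi_2)+\mathrm{sgn}(\xi_2')} d_{\xi_2'}\circ d_{\xi_2}$. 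I would then invoke the anti-commutativity relation $d_{\tilde\xi}\circ d_\eta = -d_{\eta'}\circ d_\xi$ recorded just before the statement: the two underlying cobordism composites agree (both represent the same cobordism from $T_s$ to $T_{s''}$, obtained by doing the two elementary surgeries, which commute as cobordisms since they occur in disjoint discs) up to the sign, so it remains to check that the $\mathrm{sgn}$ signs are arranged so that the two terms cancel.

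The sign check is where the care lies, so I would make it explicit. With $i<j$ the two flips, the edge flipping coordinate $i$ out of $s$ has a $\star$ in position $i$, and $\mathrm{sgn}$ counts the $1$s before that $\star$; after flipping $i$, the subsequent flip of $j$ has its $1$s-before-$\star$ count increased by one (position $i$ now holds a $1$ and lies before position $j$). Along the other path, the flip of $j$ first sees the same number of $1$s before position $j$ as position $i$ originally held a $0$, and the subsequent flip of $i$ sees no new $1$ before position $i$. Tallying, one path contributes total sign $(-1)^{a}$ and the other $(-1)^{a+1}$ to the product of edge-signs, so the explicit signs already differ by $-1$; combined with the anti-commutativity of the cobordism square (another factor $-1$), the two terms are equal, not opposite — hence one must be careful about which factor of $-1$ comes from $\mathrm{sgn}$ and which from anti-commutativity. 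The clean way to phrase it, which I would adopt, is: the chosen signs $(-1)^{\mathrm{sgn}(\xi)}$ are designed precisely so that they \emph{correct} the anti-commutative square into a commutative one, i.e. $(-1)^{\mathrm{sgn}(\xi)+\mathrm{sgn}(\xi')}d_{\xi'}d_\xi$ on one path equals $-$ of the same expression on the other path, whence the face contributes $0$ to $d^{k+1}d^k$. Summing over all faces (equivalently, all pairs $i<j$ and all compatible states) gives $d^{k+1}\circ d^k=0$ entrywise, completing the proof. The main obstacle is genuinely just the sign accounting; the topological content — that two elementary surgeries in disjoint discs commute as cobordisms — is immediate.
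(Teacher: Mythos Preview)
The paper does not supply its own proof; the proposition is simply quoted from Bar-Natan. Your argument is the standard one and is structurally correct, but the sign discussion gets tangled, and the tangle originates in part from the paper's own slightly misleading remark just before the proposition.

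Here is the issue. You correctly observe that the two composite cobordisms around a face are \emph{equal}: the elementary saddles occur in disjoint discs and hence commute as cobordisms. You also correctly compute that the products of edge-signs $(-1)^{\mathrm{sgn}}$ along the two paths differ by exactly one factor of $-1$. Those two facts together already give $d^{k+1}\circ d^k=0$: the two signed contributions are negatives of each other and cancel. There is no second factor of $-1$.

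The confusion enters when you then invoke the paper's ``anti-commutativity relation $d_{\tilde\xi}\circ d_\eta = -d_{\eta'}\circ d_\xi$'' as an \emph{additional} minus sign. Taken literally (with $d_\xi$ the unsigned cobordism morphisms, as the paper defines them), that relation is not correct: the unsigned faces commute, as you yourself said one sentence earlier. What the paper presumably means is that the faces become anti-commutative \emph{after} the $(-1)^{\mathrm{sgn}(\xi)}$ signs are attached --- which is exactly your conclusion, not an independent input. So your sentence ``combined with the anti-commutativity of the cobordism square (another factor $-1$), the two terms are equal, not opposite'' should simply be deleted, along with the hedging that follows it; likewise your phrase ``correct the anti-commutative square into a commutative one'' has the direction reversed. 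Once you excise that paragraph, the proof is clean: unsigned faces commute, the $\mathrm{sgn}$-signs contribute opposite parities on the two paths, the face sums to zero.
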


The cochain complex $([[T]]^{\ast}, d^{\ast})$ is called the \emph{bracket complex} of $T$. However, the bracket complex $([[T]]^{\ast}, d^{\ast})$ is not a tangle invariant in the category $\mathbf{Ch}^{\bullet}(\mathrm{Mat}(\mathbf{k}\mathcal{C}ob^3(\partial T)))$ of cochain complexes over $\mathrm{Mat}(\mathbf{k}\mathcal{C}ob^3(\partial T))$.
In \cite{bar2005khovanov}, Bar-Natan obtains a new category from $\mathrm{Mat}(\mathbf{k}\mathcal{C}ob^3(\partial T))$ by modding out some equivalence relations. In this new category, he proves that the bracket complex is a tangle invariant up to chain homotopy.

Let $B$ be a finite set of points on a circle. The category $\mathbf{k}\mathcal{C}ob_{/l}^3(B)$ is a localization of the category $\mathbf{k}\mathcal{C}ob^3(B)$ defined as follows. The objects are the same as the objects in $\mathbf{k}\mathcal{C}ob^3(B)$. The morphisms are those of $\mathbf{k}\mathcal{C}ob^3(B)$ under the following equivalence relations:
\begin{itemize}
  \item[$(S)$] $C +  S^{2} = 0$ for any cobordism $C$ in $\mathbf{k}\mathcal{C}ob^3(B)$. Here, $S^{2}$ is the cobordism of the 2-dimensional sphere.
  \item[$(T)$] $C + T^{2} = 2C$ for any cobordism $C$ in $\mathbf{k}\mathcal{C}ob^3(B)$. Here, $T^{2}$ is the cobordism corresponding to the torus.
  \item[$(4Tu)$] $C_{12} + C_{34} = C_{13} + C_{24}$. Here, $C$ is a cobordism whose intersection with a ball is the union of four disks $D_{i}$, $i = 1, 2, 3, 4$, and $C_{ij}$ is the cobordism obtained by removing $D_{i}$ and $D_{j}$ from $C$ and replacing them with a tube that has the same boundary.
\begin{figure}[htb!]
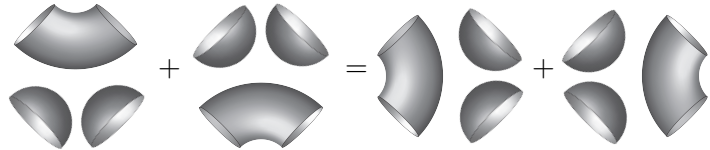

  \centering
  $\tubeab+\tubecd=\tubeac+\tubebd$
  \caption{The cobordism representation of the $(4Tu)$ relation.}\label{figure:4tu}
\end{figure}
\end{itemize}
Since $\mathbf{k}\mathcal{C}ob^3(B)$ is a pre-additive category, so is $\mathbf{k}\mathcal{C}ob_{/l}^3(B)$. Moreover, one has an additive category $\mathrm{Mat}(\mathbf{k}\mathcal{C}ob_{/l}^3(B))$.

\begin{theorem}[\text{\cite[Theorem 1]{bar2005khovanov}}]
The construction $([[T]]^{\ast},d^{\ast})$ is a tangle invariant up to chain homotopy in the category $\mathbf{Ch}^{\bullet}(\mathrm{Mat}(\mathbf{k}\mathcal{C}ob^3_{/l}(\partial T)))$ of cochain complexes over $\mathrm{Mat}(\mathbf{k}\mathcal{C}ob^3_{/l}(\partial T))$.
\end{theorem}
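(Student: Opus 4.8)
The plan is to show that the bracket complex is invariant, up to chain homotopy, under each Reidemeister move, and then to invoke the fact that any two diagrams of isotopic tangles are connected by a finite sequence of such moves carried out inside disks disjoint from the rest of the diagram (together with planar isotopies). The bracket construction $[[\cdot]]$ is a morphism of planar algebras: replacing a sub-tangle sitting in a disk by another tangle replaces the corresponding local bracket complex and splices it, via horizontal composition of cobordisms, into the bracket complex of the ambient diagram. Consequently it suffices to establish (i) for each of R1, R2, R3 a homotopy equivalence between the two local bracket complexes in $\mathbf{Ch}^{\bullet}(\mathrm{Mat}(\mathbf{k}\mathcal{C}ob^3_{/l}(B)))$ for the relevant finite boundary set $B$, and (ii) that homotopy equivalence of complexes over $\mathrm{Mat}(\mathbf{k}\mathcal{C}ob^3_{/l}(-))$ is preserved both by planar composition with a fixed complex and by the mapping-cone operation used to splice in extra crossings. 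Item (ii) is formal: if $dh+hd=\mathrm{id}-gf$, then after tensoring with a fixed complex $(\mathrm{id}\otimes d)(\mathrm{id}\otimes h)+(\mathrm{id}\otimes h)(\mathrm{id}\otimes d)=\mathrm{id}-(\mathrm{id}\otimes g)(\mathrm{id}\otimes f)$, and mapping cones of chain-homotopic maps are homotopy equivalent. The trivial move (``R0'', planar isotopy) induces an outright isomorphism of state cubes, hence of bracket complexes.

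For the remaining moves I would, in each case, write out the small state cube, label its edges by the elementary cobordisms (births, deaths, saddles, handle attachments) that implement the one-crossing resolutions, and then exhibit an explicit degree-$(-1)$ map $h$ together with the candidate equivalence, checking the identities $dh+hd=\mathrm{id}-gf$ and $fg=\mathrm{id}$ modulo the local relations. For R1 the local cube is a two-term complex $T_0\to T_1$, one of whose vertices carries a disjoint circle; neck-cutting (a formal consequence of $(S)$, $(T)$, $(4Tu)$) peels off that circle and makes the differential contain an invertible entry, so the complex deformation-retracts onto a single grading-shifted copy of the curl-free diagram --- the normalization of the height function by $n_-$ absorbing the shift. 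For R2 the cube is a $2\times 2$ square; after the same neck-cutting at the vertex carrying a circle, two entries of the total differential become isomorphisms, and a double Gaussian elimination (equivalently, an explicit homotopy built from one saddle cobordism) collapses the square onto the one-term complex given by the two un-crossed parallel strands.

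The substantial case is R3. Both sides are $3$-cubes with eight vertices; I would reduce each one --- again by neck-cutting the circles that appear and Gaussian-eliminating the resulting isomorphisms --- to a small complex, and then identify the two reductions. The cleanest way to do this is the categorified Kauffman trick: distinguish one crossing, write each side of R3 as the mapping cone of a morphism $\Phi$ out of the bracket complex of the diagram with that crossing $0$-smoothed, observe that both resulting morphisms $\Phi_{\mathrm{left}}$ and $\Phi_{\mathrm{right}}$ are, after incorporating the ambient strand, chain homotopic, and conclude that the cones are homotopy equivalent. I expect R3 to be the main obstacle: organizing the eight vertices and the edge signs $(-1)^{\mathrm{sgn}(\xi)}$ while tracking the differential corrections produced by each Gaussian elimination is delicate, and, more importantly, the homotopies needed to establish $\Phi_{\mathrm{left}}\simeq\Phi_{\mathrm{right}}$ are assembled from cobordisms with four boundary circles and only close up after the $(4Tu)$ relation is imposed --- this is exactly where passing from $\mathrm{Mat}(\mathbf{k}\mathcal{C}ob^3(\partial T))$ to its localization $\mathrm{Mat}(\mathbf{k}\mathcal{C}ob^3_{/l}(\partial T))$ becomes essential. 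A last, routine verification is that all maps constructed above are genuine morphisms of the localized category and that the homotopy identities hold modulo $(S)$, $(T)$, $(4Tu)$ rather than literally.
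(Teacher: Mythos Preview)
The paper does not prove this theorem: it is quoted verbatim from \cite[Theorem~1]{bar2005khovanov} and used as a black box, with no argument supplied. There is therefore nothing in the present paper to compare your proposal against.

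That said, your sketch is essentially Bar-Natan's own proof. The ingredients you list --- the planar-algebra functoriality of $[[\cdot]]$ reducing the problem to local moves, the explicit two-term and four-term local complexes for R1 and R2, Gaussian elimination (``delooping'') via the $(S)$, $(T)$, $(4Tu)$ relations, and for R3 the categorified Kauffman trick together with the observation that the R2 homotopy equivalence already does most of the work --- match the structure of \cite{bar2005khovanov} closely. One small correction of emphasis: in Bar-Natan's argument the $(4Tu)$ relation is actually used already at the R1 stage (to show the curl complex retracts onto the uncurled strand), not only at R3; and for R3 he does not directly compare two cones but rather shows that each side, after an R2-type reduction, becomes the \emph{same} explicit complex. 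Your mapping-cone formulation is a valid alternative route, but be aware that verifying $\Phi_{\mathrm{left}}\simeq\Phi_{\mathrm{right}}$ as chain maps (not merely that their cones are equivalent) requires some care with the homotopies, and Bar-Natan's ``reduce both sides to a common target'' is slightly cleaner bookkeeping.
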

The above theorem says that the bracket complex of $T$ in the category of cochain complexes over $\mathrm{Mat}(\mathbf{k}\mathcal{C}ob^3_{/l}(\partial T))$ is an invariant  under Reidemeister moves up to chain homotopy.

\begin{definition}
Let $T$ be a tangle. The \emph{Khovanov complex} of $T$ is the cochain complex $(Kh^{\ast}(T),d_{T}^{\ast})$ given by $Kh^{p}(T)=[[T]]^{p-n_{-}}$ and $d^{p}_{T}=d^{p-n_{-}}$.
\end{definition}

The Khovanov complex and the bracket complex differ by a height shift. Specifically, when the tangle $T$ is a knot or link, the corresponding Khovanov complex is consistent with the Khovanov complex of the knot or link. Similarly, if two tangles $T_{1}$ and $T_{2}$ differ by some Reidemeister moves, there exists a chain homotopy equivalence $Kh(T_{1}) \simeq Kh(T_{2})$.

Let $B\subseteq S^{1}$ be a finite set of points. Let $\mathcal{C}ob^{4}(B)$ be the category whose objects are tangles in a disk $D$ with boundary $B$, and whose morphisms are 2-dimensional cobordisms between these tangles in $D\times [-\epsilon,\epsilon]\times [0,1]$ with boundary $B\times [-\epsilon,\epsilon]\times [0,1]$. The construction $Kh$ gives a functor $Kh_{B} : \mathcal{C}ob^{4}(B) \to \mathbf{Ch}^{\bullet}(\mathrm{Mat}(\mathbf{k}\mathcal{C}ob^3_{/l}(B)))$ from the category $\mathcal{C}ob^{4}(B)$ of tangles with boundary $B$ to the category of cochain complexes over $\mathrm{Mat}(\mathbf{k}\mathcal{C}ob^3_{/l}(B))$.

\begin{theorem}
The functor $Kh_{B}:\mathcal{C}ob^{4}(B) \to \mathbf{Ch}^{\bullet}(\mathrm{Mat}(\mathbf{k}\mathcal{C}ob^3_{/l}(B)))$ maps the equivalence classes of isotopy of tangles to the equivalence classes of chain homotopy of cochain complexes.
\end{theorem}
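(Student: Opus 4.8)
\section*{Proof proposal}

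The plan is to deduce the statement from the already-recalled invariance of the Khovanov complex under Reidemeister moves, using the Reidemeister theorem for tangle diagrams to pass from isotopy to a finite sequence of moves. Since $Kh_{B}$ is already known to be a functor $\mathcal{C}ob^{4}(B)\to\mathbf{Ch}^{\bullet}(\mathrm{Mat}(\mathbf{k}\mathcal{C}ob^{3}_{/l}(B)))$, it suffices to show that if two tangles $T_{1},T_{2}$ in $D$ with boundary $B$ are isotopic (via an ambient isotopy of $B^{3}$ fixing $\partial B^{3}$), then $Kh_{B}(T_{1})$ and $Kh_{B}(T_{2})$ are chain homotopy equivalent; this is exactly the assertion that the functor descends to a well-defined map from isotopy classes of tangles to chain homotopy classes of cochain complexes.

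First I would invoke the Reidemeister-type theorem for tangles with fixed endpoints: two tangle diagrams in $D$ with the same boundary set $B$ represent isotopic tangles if and only if they are related by a finite sequence of planar isotopies of $D$ and the three Reidemeister moves $RI$, $RII$, $RIII$; see \cite{le1995representation}. A planar isotopy of the diagram does not alter the crossing set $\mathcal{X}(T)$, the state cube $\{0,1\}^{n}$, the heights $h(s)$, or the cobordism morphisms $d_{\xi}$, hence it leaves the bracket complex $([[T]]^{\ast},d^{\ast})$ — and therefore $Kh_{B}(T)$ — literally unchanged. So only the three Reidemeister moves require attention.

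Next, for each Reidemeister move I would apply the invariance theorem \cite[Theorem 1]{bar2005khovanov} recalled above, transported along the height shift $Kh^{p}(T)=[[T]]^{p+n_{+}-n_{-}}$ relating the bracket complex to the Khovanov complex. Concretely, if $T'$ is obtained from $T$ by one of $RI$, $RII$, $RIII$, then that theorem provides a chain homotopy equivalence $Kh_{B}(T)\simeq Kh_{B}(T')$ in $\mathbf{Ch}^{\bullet}(\mathrm{Mat}(\mathbf{k}\mathcal{C}ob^{3}_{/l}(B)))$, the boundary $B$ being unchanged because the moves are local. The one point that must be checked with care is that the normalization by $n_{+}-n_{-}$ is precisely what cancels the cohomological shift created by an $RI$ move, which changes the crossing number and the writhe simultaneously; this is why the statement must be made for the Khovanov complex rather than for the unnormalized bracket complex, which is invariant under $RI$ only up to shift. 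Finally, chain homotopy equivalence is a transitive relation on $\mathbf{Ch}^{\bullet}(\mathrm{Mat}(\mathbf{k}\mathcal{C}ob^{3}_{/l}(B)))$ — one composes the equivalences and combines the homotopies in the usual way — so iterating over the finite sequence of moves connecting $T_{1}$ to $T_{2}$ yields $Kh_{B}(T_{1})\simeq Kh_{B}(T_{2})$, as required.

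I do not expect a serious obstacle here: the substantive content, that each Reidemeister move induces a chain homotopy equivalence of Khovanov complexes, is imported verbatim from \cite{bar2005khovanov}, and the remaining work is the reduction of isotopy to moves together with the degree-shift bookkeeping of the previous paragraph. If one wishes to strengthen the statement to the level of morphisms — that isotopic tangle cobordisms in $D\times[-\epsilon,\epsilon]\times[0,1]$ induce chain-homotopic chain maps — the same strategy applies with the Carter--Saito movie-move theorem \cite{carter1993reidemeister,roseman1998reidemeister} in place of the Reidemeister theorem and Bar-Natan's movie-move invariance in place of the single-move invariance, the only extra care being the tracking of the signs of the induced chain maps; for the theorem as stated, which concerns the objects only, this is not needed.
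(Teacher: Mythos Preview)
Your proposal is correct and aligns with the paper's approach: the paper does not give an explicit proof of this theorem but states it immediately after quoting \cite[Theorem~1]{bar2005khovanov}, treating it as a direct consequence of Bar-Natan's Reidemeister invariance combined with the height-shift normalization defining $Kh$. Your write-up simply makes explicit the Reidemeister-reduction step and the writhe bookkeeping that the paper leaves implicit.
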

It is worth noting that  Bar-Natan's construction directly forms cochain complexes in the category $\mathbf{k}\mathcal{C}ob^3_{/l}(B)$, which provides a more fundamental approach compared to the Khovanov complex constructed within the framework of topological quantum field theory (TQFT). However, this more intrinsic construction comes with a significant limitation: we cannot directly define Khovanov homology because the category $\mathbf{k}\mathcal{C}ob^3_{/l}(B)$ is not an abelian category.

\subsection{Khovanov homology of tangles}\label{section:tqft}

Let $\mathcal{A}b$ be an abelian category. Note that any functor $\mathcal{F}:\mathbf{k}\mathcal{C}ob^3_{/l}(B)\to \mathcal{A}b$ can extend to a functor $\mathcal{F}:\mathrm{Mat}(\mathbf{k}\mathcal{C}ob^3_{/l}(B))\to \mathcal{A}b$. Thus one can obtain a functor $\mathcal{F}^{\bullet}:\mathbf{Ch}^{\bullet}(\mathrm{Mat}(\mathbf{k}\mathcal{C}ob^3_{/l}(B)))\to \mathbf{Ch}^{\bullet}(\mathcal{A}b)$ given by $\mathcal{F}^{\bullet}(\Omega^{\ast},d^{\ast})=(\mathcal{F}\Omega^{\ast},\mathcal{F}d^{\ast})$. Recall that the homology is a functor $H:\mathbf{Ch}^{\bullet}(\mathcal{A}b)\to \mathcal{A}b$ from the category of cochain complexes to an abelian category. We have the definition of Khovanov homology of tangles as follows.

\begin{definition}
Let $B$ be a finite set of points on a circle. Let $\mathcal{F}:\mathbf{k}\mathcal{C}ob^3_{/l}(B)\to \mathcal{A}b$ be a functor into an abelian category. The \emph{Khovanov homology of tangles} with respect to $\mathcal{F}$ is the composition of functors
\begin{equation*}
  \xymatrix{
  \mathcal{C}ob^{4}(B)\ar@{->}[r]^-{Kh_{B}}&\mathbf{Ch}^{\bullet}(\mathrm{Mat}(\mathbf{k}\mathcal{C}ob^3_{/l}(B))\ar@{->}[r]^-{\mathcal{F}^{\bullet}}&\mathbf{Ch}^{\bullet}(\mathcal{A}b)\ar@{->}[r]^-{H}& \mathcal{A}b.
  }
\end{equation*}

\end{definition}

It can be verified that $H\mathcal{F}^{\bullet}Kh_{B}$ is an isotopy invariant of tangles with boundary $B$.
The definition of Khovanov homology mentioned above relies on the functor $\mathcal{F}$. Recall that the category $\mathcal{M}od_{\mathbf{k}}$ of modules is an abelian category. In TQFT, there is a standard construction of the functor $\mathcal{F} : \mathcal{C}ob^3(\emptyset) \to \mathcal{M}od_{\mathbf{k}}$, which yields the usual definition of Khovanov homology of links.

\begin{wrapfigure}{r}{0.3\textwidth}
    \centering
    \includegraphics[width=0.3\textwidth]{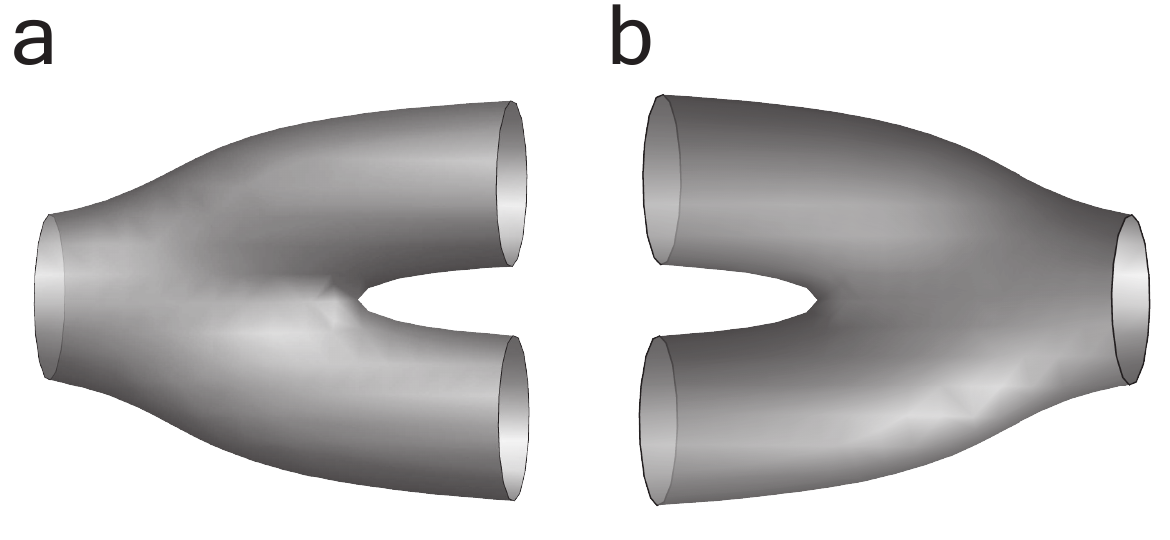} 
    \caption{The cobordisms corresponding to the maps $\wedge$ and $\vee$.}
    \label{fig:product}
\end{wrapfigure}
Consider the functor $\mathcal{F}:\mathcal{C}ob^3(\emptyset)\to \mathcal{M}od_{\mathbf{k}}$ constructed as follows. Let $V$ be a $\mathbf{k}$-module generated by the elements $v_{+}$ and $v_{-}$.
For a link $L$, the $\mathbf{k}$-module $\mathcal{F}(L)$ is the tensor product $\underbrace{V\otimes_{\mathbf{k}} V\otimes_{\mathbf{k}}\cdots \otimes_{\mathbf{k}} V}_{r(L)}$, where $r(L)$ is the number of circles of $L$. Note that the morphisms in $\mathcal{C}ob^3(\emptyset)$ are compositions of those represented by cap and cup cobordisms, along with the morphisms $\wedge$ and $\vee$, corresponding to saddle cobordisms. The functor $\mathcal{F}$ is given as follows:
\begin{equation*}
  \mathcal{F}(\bigcap)=\epsilon:\mathbf{k}\to V,\quad 1\mapsto v_{+},
\end{equation*}
where $\bigcap : \emptyset \to \bigcirc$ denotes the morphism corresponding to the cap cobordism shown in Figure \ref{fig:capcup}a,
\begin{equation*}
  \mathcal{F}(\bigcup)=\eta:V\to \mathbf{k},\quad v_{+}\mapsto 0,v_{-}\mapsto 1
\end{equation*}
where $\bigcup : \bigcirc \to \emptyset$ denotes the morphism corresponding to the cup cobordism depicted in Figure \ref{fig:capcup}b.
\begin{equation*}
  \mathcal{F}(\wedge) = \Delta: V \to V \otimes_{\mathbf{k}} V, \quad \Delta:\left\{
                             \begin{array}{ll}
                               v_{+} \mapsto v_{+} \otimes v_{-} + v_{-} \otimes v_{+}, \\
                               v_{-} \mapsto v_{-} \otimes v_{-},
                             \end{array}
                           \right.
\end{equation*}
where $\wedge$ denotes the splitting of a circle into two circles, as shown in Figure \ref{fig:product}a.
\begin{equation*}
  \mathcal{F}(\vee) = m: V \otimes_{\mathbf{k}} V \to V, \quad m:\left\{
                             \begin{array}{ll}
                               v_{+} \otimes v_{+} \mapsto v_{+}, \quad v_{-} \otimes v_{+} \mapsto v_{-}, \\
                               v_{+} \otimes v_{-} \mapsto v_{-}, \quad v_{-} \otimes v_{-} \mapsto 0,
                             \end{array}
                           \right.
\end{equation*}
where $\vee$ denotes the merging of circles into a circle, as shown in Figure \ref{fig:product}b.
One can verify that the construction above can extend to a functor $\mathcal{F}:\mathbf{k}\mathcal{C}ob^3_{/l}(\emptyset)\to \mathcal{M}od_{\mathbf{k}}$. Fix a link $L$, the construction $\mathcal{F}^{\bullet}Kh_{\emptyset}(L)$ is a cochain complex of $\mathbf{k}$-modules. The differential is the $\mathbf{k}$-module homomorphism $d^{k} = \sum\limits_{\xi} (-1)^{\sgn(\xi)} \mathcal{F}(d_{\xi})$, where $d_{\xi}$ is the map given by $\mathcal{F}(\vee)$ or $\mathcal{F}(\wedge)$ on the components involved in merging or splitting, and the identity on other components. In this case, the homology $H(\mathcal{F}^{\bullet}Kh_{\emptyset}(L))$ coincides with the classical definition of Khovanov homology for links.
Additionally, each element $x$ in the cochain complex $\mathcal{F}^{\bullet}Kh_{\emptyset}(L)$ has a quantum grading given by $\Phi(x) = p(x) + n_{+} - n_{-} + \theta(x)$, where $p(x)$ is the height of $x$ in the cochain complex, and $\theta(x)$ is obtained by taking $\theta(v_{+}) = 1$ and $\theta(v_{-}) = -1$.

In the remainder of this paper, we will denote $\mathcal{K}_{B} = \mathcal{F}^{\bullet}Kh_{B} : \mathcal{C}ob^4(B) \to \mathbf{Ch}^{\bullet}(\mathcal{M}od_{\mathbf{k}})$ and $H(-;\mathcal{F}) = H\mathcal{K}_{B} : \mathcal{C}ob^4(B)  \to \mathcal{M}od_{\mathbf{k}}$ for simplicity. Unless otherwise specified, the notation $\mathcal{K}_{\emptyset} = \mathcal{F}^{\bullet}Kh_{\emptyset} : \mathcal{C}ob^4(\emptyset) \to \mathbf{Ch}^{\bullet}(\mathcal{M}od_{\mathbf{k}})$ will always be based on the construction of $\mathcal{F}$ given in Section \ref{section:tqft}.

Now, consider the case where $\mathbf{k}$ is a field. Let $M = \bigoplus\limits_{i \in \mathbb{Z}} M_{i}$ be a graded $\mathbf{k}$-linear space. The \emph{graded dimension} of $M$ is defined as the polynomial $\qdim M = \sum\limits_{i \in \mathbb{Z}} q^{i} \dim M_{i}$ in the variable $q$.
For the above construction of $V$, let $\deg v_{+} = 1$ and $\deg v_{-} = -1$. Then we have $\qdim V = q + q^{-1}$. The \emph{graded Euler characteristic} of a cochain complex $C^{\ast}$ of $\mathbf{k}$-linear spaces is defined by $\mathcal{X}_{q} = \sum\limits_{k} (-1)^{k} \qdim C^{k}$. Let $T$ be a tangle. Then the Jones polynomial of $T$ can be expressed as
\begin{equation*}
  J_{q}(T) = \sum\limits_{k} (-1)^{k} \qdim H^{k}(T, \mathcal{F}).
\end{equation*}
When $\partial T = \emptyset$, $J_{q}(T)$ corresponds to the classical unnormalized Jones polynomial.

\section{Persistent Khovanov homology of tangles  }\label{section:persistence_tangle}

Tangles are common research objects across various disciplines, such as curve-like data which locally appear as tangles, and they have significant application potential. Studying the persistent Khovanov homology of tangles   is a natural idea, and it offers a new tool for understanding complex entangled structures. In this section, we introduce the concept of persistent Khovanov homology of tangles. Moreover, to ensure the computability of tangle homology, we provide a construction from the category $\mathcal{C}ob^{3}(B)$  of tangles to the category of $\mathbf{k}$-modules.

\subsection{Persistent Khovanov homology  }\label{section:persistence_tangle}
Let $B$ be a finite set of points on the circle $S^{1}$. Suppose $(X,\leq)$ is a poset with the partial order $\leq$. Then $(X,\leq)$ can be regarded as a category whose objects are the elements in $X$, and whose morphisms are the pairs $x\leq x'$ with $x,x'\in X$.

\begin{definition}
A \emph{persistence tangle} with boundary $B$ is a functor $\mathcal{P}:(X,\leq)\to \mathcal{C}ob^{4}(B)$ into the category of tangles with boundary $B$.
\end{definition}

\begin{example}
A movie of a tangle cobordism $\Sigma$ is the intersection of the tangle cobordism in $D \times [-\epsilon, \epsilon] \times [0,1]$ with cylinder spaces $D \times [-\epsilon, \epsilon] \times \{t\}$. This movie is called the \textit{movie representation} of the tangle cobordism $\Sigma$. For each $t \in [0,1]$, the intersection corresponds to a tangle. The movie representation of a tangle cobordism can be understood as depicting each frame of the movie.

A movie representation of the tangle cobordism in the category $\mathcal{C}ob^{4}(B)$ can equivalently be described as a persistence tangle. Given a tangle cobordism $\Sigma$ with boundary $B\times [-\epsilon,\epsilon]\times [0,1]$, the functor $\mathcal{P}:([0,1],\leq)\to \mathcal{C}ob^{4}(B)$ given by $\mathcal{P}(t)=\Sigma\cap (D \times [-\epsilon, \epsilon] \times \{t\})$ is a persistence tangle. The persistence tangle $\mathcal{P}(t)$ is also a movie representation of the tangle cobordism $\Sigma$.
\end{example}

\begin{definition}
Let $\mathcal{P}:(X,\leq)\to \mathcal{C}ob^{4}(B)$ be a persistence tangle. The \emph{persistent homology of $\mathcal{P}$} is the composition of functors
\begin{equation*}
  \xymatrix{
  (X,\leq)\ar@{->}[r]^-{\mathcal{P}}&\mathcal{C}ob^{4}(B)\ar@{->}[r]^{H(-;\mathcal{F})}& \mathcal{M}od_{\mathbf{k}}.
  }
\end{equation*}
Here, $H(-;\mathcal{F}) :\mathcal{C}ob^{4}(B) \to \mathcal{M}od_{\mathbf{k}}$ is the homology of tangles.
\end{definition}
Specifically, for any $a \leq b$ in $X$, the $(a,b)$-persistent Khovanov homology of the persistence tangle $\mathcal{P}:(X,\leq)\to \mathcal{C}ob^{4}(B)$ is given by
\begin{equation*}
  H_{a,b}^{p}(\mathcal{P},B) = \im \left(H^{p}(\mathcal{P}(a),B) \to H^{p}(\mathcal{P}(b),B)\right), \quad p \in \mathbb{Z}.
\end{equation*}
The graded dimension of $H_{a,b}^{p}(\mathcal{P},B)$ is the Betti polynomial $\beta_{a,b}^{p}(q)=\sum\limits_{\omega\in H_{a,b}^{p}(\mathcal{P},B)}q^{\Phi(\omega)}$, where $\Phi(\omega)$ is the quantum grading of $\omega$.

Specifically, let $(X, \leq) = (\mathbb{Z}, \leq)$. Let $\mathbf{H} = \bigoplus\limits_{a \in \mathbb{Z}} H^{\ast}(\mathcal{P}(a), B)$. For any $a \in \mathbb{Z}$, we have a map
\begin{equation*}
  z: H^{\ast}(\mathcal{P}(a), B) \to H^{\ast}(\mathcal{P}(a + 1), B),
\end{equation*}
which induces a map $z: \mathbf{H} \to \mathbf{H}$. Thus, $\mathbf{H}$ is a $\mathbf{k}[z]$-module. This implies that the persistent Khovanov homology of tangles is also a persistence module.
Under certain conditions, persistent Khovanov homology exhibits the structure theorem of persistence modules, the fundamental characterization of the corresponding barcodes, as well as the stability theorem for persistence modules. We shall not expend further in elaborating on these analogous results.
\begin{figure}[htb!]
  \centering
  \includegraphics[width=0.6\textwidth]{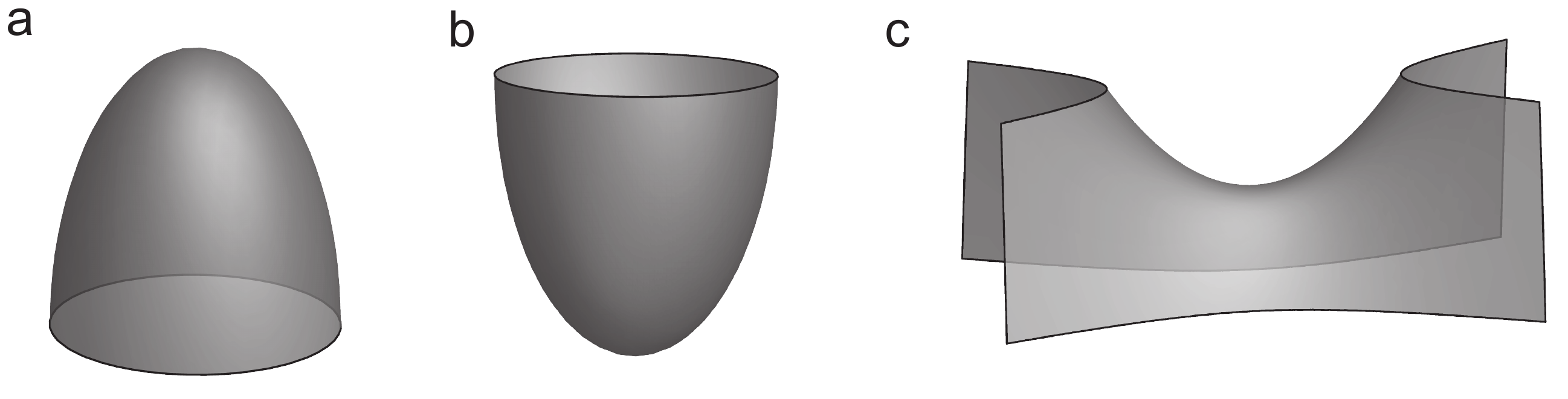}\\
  \caption{The subfigures a, b, and c represent the cap cobordism, cup cobordism, and saddle cobordism, respectively.}\label{fig:capcup}
\end{figure}

Let $\raisebox{-0.15cm}{\saddle{0.3}} : \raisebox{-0.15cm}{\udarc{0.3}}\to \raisebox{-0.15cm}{\lrarc{0.3}}$ be the morphism representing the saddle cobordism (see Figure \ref{fig:capcup}c).
It is known that the morphisms in the category $\mathcal{C}ob^{4}(\emptyset)$ are generated by the three Reidemeister moves and the morphisms $\bigcap$, $\bigcup$, and $\raisebox{-0.15cm}{\saddle{0.3}}$.

Let $\bigcap : T \to T \amalg \bigcirc$ be the morphism of tangles that produces a circle. Here, $T \amalg \bigcirc$ denotes the disjoint union of the tangle $T$ and the circle $\bigcirc$.
Note that the cochain complex $\mathcal{K}_{\emptyset}(T \amalg \bigcirc) = \mathcal{K}_{\emptyset}(T) \otimes_{\mathbf{k}} V$. Thus, the morphism $\mathcal{K}_{\emptyset}(\bigcap) : \mathcal{K}_{\emptyset}(T) \to \mathcal{K}_{\emptyset}(T) \otimes_{\mathbf{k}} V$ is given by $\mathcal{K}_{\emptyset}(\bigcap)(x) = x \otimes v_{+}$. Therefore, the corresponding persistent Khovanov homology of $\bigcap$ is
\begin{equation*}
    \operatorname{im} H^{\ast}(\bigcap;\mathcal{F}) = H^{\ast}(T;\mathcal{F}) \otimes v_{+}.
\end{equation*}
Let $\bigcup : T \amalg \bigcirc \to T $ be the morphism of tangles corresponding to the cup cobordism. The morphism $\mathcal{K}_{\emptyset}(\bigcup):\mathcal{K}_{\emptyset}(T)\otimes_{\mathbf{k}} V \to \mathcal{K}_{\emptyset}(T)$ is given by $\mathcal{K}_{\emptyset}(\bigcup)(x\otimes v_{+})=0$ and $\mathcal{K}_{\emptyset}(\bigcup)(x\otimes v_{-})=x$. Thus, the persistent Khovanov homology of $\bigcup$ is
\begin{equation*}
    \im H^{\ast}(\bigcup;\mathcal{F})=H^{\ast}(T;\mathcal{F}).
\end{equation*}
Let $\raisebox{-0.15cm}{\saddle{0.3}} : T\to T'$ be the morphism of tangles with a local saddle cobordism. We have a morphism $\mathcal{K}_{\emptyset}(\raisebox{-0.15cm}{\saddle{0.3}}):\mathcal{K}_{\emptyset}(T)\to \mathcal{K}_{\emptyset}(T')$ of cochain complexes. Let $\widetilde{T}$ be the tangle obtained by changing $\raisebox{-0.15cm}{\udarc{0.3}}$ of $T$ into $\raisebox{-0.15cm}{\leftcross{0.2}}$. By \cite{bar2005khovanov}, one obtains a cochain complex $$\mathcal{K}_{\emptyset}(\widetilde{T}) = \mathcal{K}_{\emptyset}(T)[-1] \oplus \mathcal{K}_{\emptyset}(T')$$ with the differential given by $\widetilde{d}(z,z') = (-dz,\mathcal{K}_{\emptyset}(\raisebox{-0.15cm}{\saddle{0.3}})(z) + d'z')$, where $\mathcal{K}_{\emptyset}(T)[-1]$ is the height shift of $\mathcal{K}_{\emptyset}(T)$ given by $\mathcal{K}_{\emptyset}(T)[-1]^{p}=\mathcal{K}_{\emptyset}(T)^{p+1}$. Here, $z \in \mathcal{K}_{\emptyset}(T)[-1]$, $z' \in \mathcal{K}_{\emptyset}(T')$, and $d$, $d'$ are the differentials of $\mathcal{K}_{\emptyset}(T)[-1]$ and $\mathcal{K}_{\emptyset}(T')$, respectively. Thus, the morphism $\mathcal{K}_{\emptyset}(\raisebox{-0.15cm}{\saddle{0.3}}):\mathcal{K}_{\emptyset}(T)\to \mathcal{K}_{\emptyset}(T')$ is given by $\mathcal{K}_{\emptyset}(\raisebox{-0.15cm}{\saddle{0.3}})(z)=p_{1}\widetilde{d}z$. Here, $p_{1}:\mathcal{K}_{\emptyset}(\widetilde{T})\to \mathcal{K}_{\emptyset}(T')$ is the projection onto the component $\mathcal{K}_{\emptyset}(T')$. Therefore, one has a $\mathbf{k}$-module homomorphism $(p_{1}\widetilde{d})^{\ast}:H^{\ast}(T;\mathcal{F})\to H^{\ast}(T';\mathcal{F})$ given by $(p_{1}\widetilde{d})^{\ast}([z])=[p_{1}\widetilde{d}z]$ for any cohomology class $[z]\in H^{\ast}(T;\mathcal{F})$. It follows that the persistent Khovanov homology of the saddle morphism $\raisebox{-0.15cm}{\saddle{0.3}}$ is
\begin{equation*}
  \im H^{\ast}(\raisebox{-0.15cm}{\saddle{0.3}};\mathcal{F})=\im (p_{1}\widetilde{d})^{\ast}.
\end{equation*}
Besides, the morphisms of the Khovanov cochain complexes induced by the three Reidemeister moves are chain homotopy equivalences, and the corresponding morphisms of the Khovanov homology are isomorphisms. Therefore, for any persistence tangle with boundary $B$, the persistent Khovanov homology is a composition of a sequence of the three types of morphisms mentioned above and can be computed step by step.

\subsection{The construction of functors on tangles}
In \cite{khovanov2000categorification}, Khovanov provides a construction of a functor from the category of $(1,1)$-tangles to the category of modules. In \cite{khovanov2002functor}, he assigns graded bimodules to tangle smoothings by considering all closures of tangles. However, these constructions have limitations for our application to persistent homology. In this section, we will present a different construction of $\mathbf{k}$-modules on tangles.

\begin{wrapfigure}{r}{0.3\textwidth}
    \centering
    \includegraphics[width=0.3\textwidth]{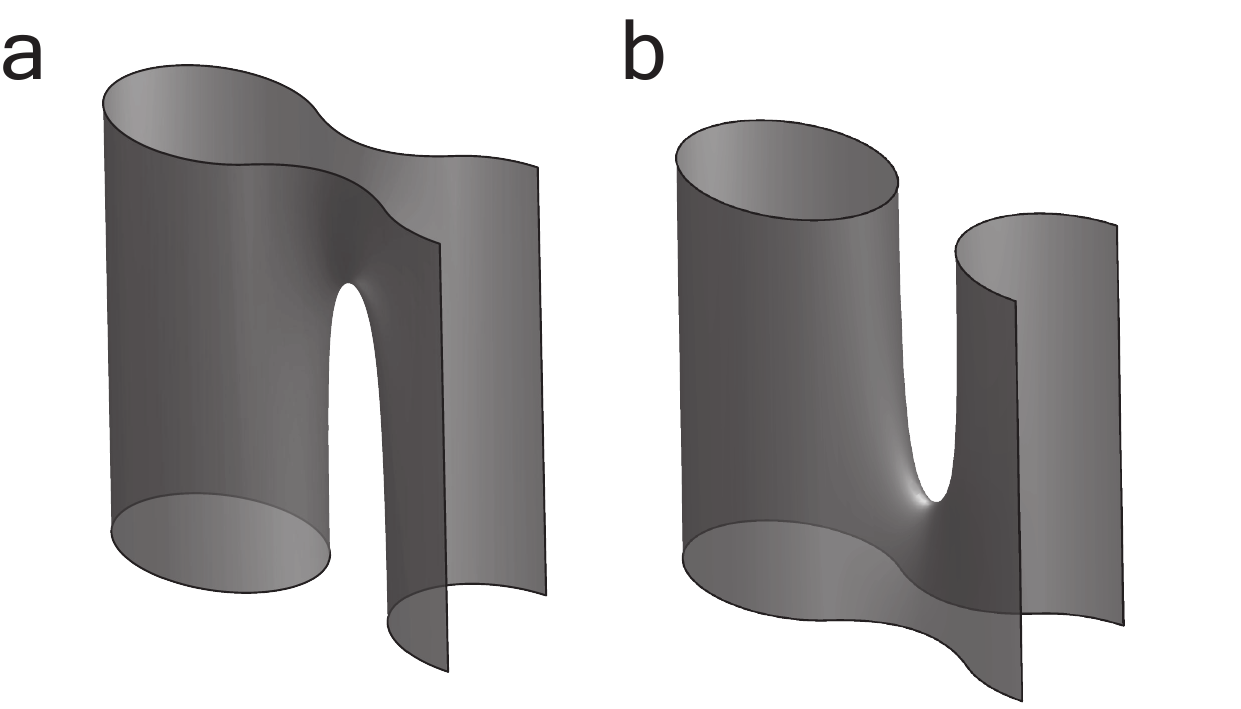} 
    \caption{The tangle cobordisms corresponding to the saddle maps in our construction.}
    \label{fig:arc}
\end{wrapfigure}
Let us define the functor $\mathcal{G}:\mathcal{C}ob^{3}(B)\to \mathcal{M}od_{\mathbf{k}}$ as follows. Let $V$ be the $\mathbf{k}$-module generated by the elements $v_{+}$ and $v_{-}$, and let $W$ be the $\mathbf{k}$-module generated by an element $w$.
For a tangle $T$ in $\mathcal{C}ob^{3}(B)$, the $\mathbf{k}$-module $\mathcal{G}(T)$ is the tensor product $\underbrace{W \otimes_{\mathbf{k}} \cdots \otimes_{\mathbf{k}} W}_{t(T)} \otimes \underbrace{V \otimes_{\mathbf{k}} \cdots \otimes_{\mathbf{k}} V}_{r(T)}$, where $r(T)$ and $t(T)$ represent the number of circles and arcs in $T$, respectively. Here, $V = \mathbf{k}\{v_{+}, v_{-}\}$ and $W = \mathbf{k}\{w\}$ are free $\mathbf{k}$-modules.

The functor $\mathcal{G}$ is defined as follows:
\begin{equation*}
  \begin{split}
      &  \mathcal{G}(\raisebox{-0.15cm}{\hsaddle{0.3}}:\raisebox{-0.15cm}{\clrarc{0.3}}\to \raisebox{-0.15cm}{\cudarc{0.3}}): W\otimes W \to W\otimes W, \quad w\otimes w \mapsto 0,\\
      &  \mathcal{G}(\raisebox{-0.15cm}{\saddle{0.3}}:\raisebox{-0.15cm}{\cudarcc{0.3}}\to \raisebox{-0.15cm}{\cudarco{0.3}}): W \to W\otimes V, \quad w \mapsto w\otimes v_{-},\\
      &  \mathcal{G}(\raisebox{-0.15cm}{\hsaddle{0.3}}:\raisebox{-0.15cm}{\cudarco{0.3}}\to \raisebox{-0.15cm}{\cudarcc{0.3}} ): W\otimes V \to W, \quad \left\{
    \begin{array}{ll}
      w \otimes v_{+} \mapsto w, \\
      w \otimes v_{-} \mapsto 0,
    \end{array}
  \right.
  \end{split}
\end{equation*}
and $\mathcal{G}$ coincides with $\mathcal{F}$ on the maps corresponding to the operations on the components of circles described in Section \ref{section:tqft}.
Here, the square boxes indicate that the arcs or circles within them are independent components in the tangle, in contrast to the arcs in the round boxes in $\raisebox{-0.15cm}{\udarc{0.3}}$ which represent local arcs within the tangle. Besides, in the above construction, the degree of $w$ is set to be $-1$.

\begin{proposition}
The construction $\mathcal{G}:\mathcal{C}ob^{3}(B)\to \mathcal{M}od_{\mathbf{k}}$ is functorial.
\end{proposition}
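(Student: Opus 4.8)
The plan is to verify that $\mathcal{G}$ respects composition and identities in $\mathcal{C}ob^{3}(B)$. Since the morphisms of $\mathcal{C}ob^{3}(B)$ are generated by cap, cup, and saddle cobordisms (the saddles being of the three local types listed, plus the circle-merge/split inherited from $\mathcal{F}$), it suffices to check that $\mathcal{G}$ is well defined on generators and that it respects the defining relations among them — that is, the relations coming from isotopies of cobordisms. Concretely, a cobordism in $\mathcal{C}ob^{3}(B)$ is determined up to isotopy by its underlying surface, so two factorizations of the same cobordism into elementary pieces must be sent to the same $\mathbf{k}$-linear map. I would therefore reduce functoriality to the finite list of "movie move"-type local relations (the exchange relations for distant saddles, the cancellation of a cap followed by the corresponding cup, associativity/coassociativity of the multiplication and comultiplication on circle components, and the Frobenius-type relations interchanging merges and splits), and check each one on basis elements.

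First I would fix notation and observe that $\mathcal{G}(T)$ depends only on the pair $(t(T), r(T))$, with a distinguished tensor factor $W$ for each arc and $V$ for each circle; the key structural point is that an arc component, once present, stays an arc under any cobordism in $\mathcal{C}ob^{3}(B)$ (arcs cannot close up to circles since boundary points are fixed in $B$), and the three arc-saddle maps above only ever involve $W$-factors or one $W$ and one $V$. Next I would check the two relations internal to the circle sector: these are exactly the Frobenius algebra relations for $(V,m,\Delta,\epsilon,\eta)$, which already hold because $\mathcal{G}$ agrees with $\mathcal{F}$ there and $\mathcal{F}$ is functorial on $\mathbf{k}\mathcal{C}ob^3_{/l}(\emptyset)$ by the construction recalled in Section \ref{section:tqft}. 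Then I would check the mixed relations: a saddle that splits an arc's companion circle off (the $W \to W\otimes V$ map $w\mapsto w\otimes v_{-}$) followed by a merge ($W\otimes V \to W$) must compose correctly with the internal circle maps and with the pure-arc saddle $W\otimes W\to W\otimes W$, $w\otimes w\mapsto 0$; these are finitely many identities on the one- or two-element generating sets of $W$ and $V$, so each is a one-line computation. Finally I would check the "far commutativity" relations — saddles acting on disjoint parts of the tangle commute — which is automatic because $\mathcal{G}$ sends disjoint union to tensor product and each generator acts as the identity on the tensor factors it does not touch; and the identity cobordism goes to the identity map by definition.

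The main obstacle I expect is bookkeeping rather than depth: one must be certain that the generating relations listed are genuinely complete, i.e. that every isotopy between cobordisms in $\mathcal{C}ob^{3}(B)$ (as opposed to $\mathbf{k}\mathcal{C}ob^3_{/l}(B)$) decomposes into the local moves being checked, and that the placement of $\star$-signs and height shifts used elsewhere in the paper does not interact with functoriality here. In particular the subtle case is the interaction of the arc-saddle $w\otimes w\mapsto 0$ with a neighboring circle operation when the saddle could be routed on either side of a circle: I would verify that both routings give $0$, which they do because any composite through the pure-arc saddle kills the $W\otimes W$ factor. Once the generating relations are dispatched on basis elements, functoriality of $\mathcal{G}$ follows, and — as the paper will presumably use in the sequel — $\mathcal{G}$ extends to $\mathbf{k}\mathcal{C}ob^{3}(B)$ and descends to the localized category $\mathbf{k}\mathcal{C}ob^{3}_{/l}(B)$ exactly when the $(S)$, $(T)$, and $(4Tu)$ relations are additionally checked on the $W$- and $V$-factors, but that is beyond the present statement.
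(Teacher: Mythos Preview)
Your proposal is correct and follows essentially the same route as the paper: reduce functoriality to checking that the elementary saddle maps satisfy the local commutation relations coming from isotopies of cobordisms, observe that the pure-circle sector is inherited from $\mathcal{F}$, and verify the remaining mixed arc--circle relations by direct computation on basis elements. The paper singles out exactly the two mixed diagrams you call ``Frobenius-type'' (an arc-merge commuting with a circle-merge $m$, and an arc-merge commuting with a circle-split $\Delta$) and checks them element by element, declaring the rest straightforward; your outline is simply a more exhaustive enumeration of the same verification.
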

\begin{proof}
Recall that the construction $\mathcal{G}$ coincides with $\mathcal{F}$ on circles and the map between circles. We will focus on mappings that include arcs. To show $\mathcal{G}$ is a functor, the nontrivial steps are to verify the following diagrams commute.
\begin{equation*}
  \xymatrix{
  \text{I:}\quad
  \mathcal{G}(\raisebox{-0.15cm}{\cudarcoo{0.3}})\ar@{->}[rr]^-{\mathcal{G}(\raisebox{-0.15cm}{\hsaddle{0.3}})\otimes \mathrm{id}}\ar@{->}[d]_{\mathrm{id}\otimes m}&& \mathcal{G}(\raisebox{-0.15cm}{\cudarco{0.3}})\ar@{->}[d]^{\mathcal{G}(\raisebox{-0.15cm}{\hsaddle{0.3}})}\\
  \mathcal{G}(\raisebox{-0.15cm}{\cudarco{0.3}})\ar@{->}[rr]^-{\mathcal{G}(\raisebox{-0.15cm}{\hsaddle{0.3}})}& &\mathcal{G}(\raisebox{-0.15cm}{\cudarcc{0.3}})
  }\quad \text{II:}\quad   \xymatrix{
  \mathcal{G}(\raisebox{-0.15cm}{\cudarco{0.3}})\ar@{->}[rr]^-{\mathcal{G}(\raisebox{-0.15cm}{\hsaddle{0.3}})}\ar@{->}[d]_{\mathrm{id}\otimes \Delta}&& \mathcal{G}(\raisebox{-0.15cm}{\cudarcc{0.3}})\ar@{->}[d]^{\mathcal{G}(\raisebox{-0.15cm}{\saddle{0.3}})}\\
  \mathcal{G}(\raisebox{-0.15cm}{\cudarcoo{0.3}})\ar@{->}[rr]^-{\mathcal{G}(\raisebox{-0.15cm}{\hsaddle{0.3}})\otimes \mathrm{id}}& &\mathcal{G}(\raisebox{-0.15cm}{\cudarco{0.3}})
  }
\end{equation*}
Indeed, a step-by-step calculation shows that
\begin{equation*}
  \begin{array}{ccccc}
    \mathcal{G}(\raisebox{-0.15cm}{\cudarcoo{0.3}})& \stackrel{\mathcal{G}(\raisebox{-0.15cm}{\hsaddle{0.3}})\otimes \mathrm{id}}{\xrightarrow{\hspace*{1.5cm}}} & \mathcal{G}(\raisebox{-0.15cm}{\cudarco{0.3}})& \stackrel{\mathcal{G}(\raisebox{-0.15cm}{\hsaddle{0.3}})}{\xrightarrow{\hspace*{1.5cm}}} & \mathcal{G}(\raisebox{-0.15cm}{\cudarcc{0.3}})  \\
    w\otimes v_{+}\otimes v_{+} & \xmapsto{\hspace*{1cm} } &w\otimes v_{+} &  \xmapsto{\hspace*{1cm} } & w \\
    w\otimes v_{+}\otimes v_{-}  & \xmapsto{\hspace*{1cm} } & w\otimes v_{-} & \xmapsto{\hspace*{1cm} }& 0 \\
    w\otimes v_{-}\otimes v_{+}  & \xmapsto{\hspace*{1cm} } & 0 & \xmapsto{\hspace*{1cm} }& 0 \\
    w\otimes v_{-}\otimes v_{-}  & \xmapsto{\hspace*{1cm} } & 0 & \xmapsto{\hspace*{1cm} }& 0 \\
  \end{array}
\end{equation*}
and
\begin{equation*}
  \begin{array}{ccccc}
    \mathcal{G}(\raisebox{-0.15cm}{\cudarcoo{0.3}})& \stackrel{\mathrm{id}\otimes m}{\xrightarrow{\hspace*{1.5cm}}} & \mathcal{G}(\raisebox{-0.15cm}{\cudarco{0.3}})& \stackrel{\mathcal{G}(\raisebox{-0.15cm}{\hsaddle{0.3}})}{\xrightarrow{\hspace*{1.5cm}}} & \mathcal{G}(\raisebox{-0.15cm}{\cudarcc{0.3}})  \\
    w\otimes v_{+}\otimes v_{+} & \xmapsto{\hspace*{1cm} } &w\otimes v_{+} &  \xmapsto{\hspace*{1cm} } & w \\
    w\otimes v_{+}\otimes v_{-}  & \xmapsto{\hspace*{1cm} } & w\otimes v_{-} & \xmapsto{\hspace*{1cm} }& 0 \\
    w\otimes v_{-}\otimes v_{+}  & \xmapsto{\hspace*{1cm} } & w\otimes v_{-} & \xmapsto{\hspace*{1cm} }& 0 \\
    w\otimes v_{-}\otimes v_{-}  & \xmapsto{\hspace*{1cm} } & 0 & \xmapsto{\hspace*{1cm} }& 0. \\
  \end{array}
\end{equation*}
For the second diagram, we have that
\begin{equation*}
  \begin{array}{ccccc}
    \mathcal{G}(\raisebox{-0.15cm}{\cudarco{0.3}}) & \stackrel{\mathcal{G}(\raisebox{-0.15cm}{\hsaddle{0.3}})}{\xrightarrow{\hspace*{1.5cm}}} & \mathcal{G}(\raisebox{-0.15cm}{\cudarcc{0.3}}) & \stackrel{\mathcal{G}(\raisebox{-0.15cm}{\saddle{0.3}})}{\xrightarrow{\hspace*{1.5cm}}} & \mathcal{G}(\raisebox{-0.15cm}{\cudarco{0.3}}) \\
    w\otimes v_{+} & \xmapsto{\hspace*{1cm} } & w & \xmapsto{\hspace*{1cm} } & w\otimes v_{-} \\
    w\otimes v_{-} & \xmapsto{\hspace*{1cm} } & 0 & \xmapsto{\hspace*{1cm} } & 0
  \end{array}
\end{equation*}
and
\begin{equation*}
  \begin{array}{ccccc}
    \mathcal{G}(\raisebox{-0.15cm}{\cudarco{0.3}}) & \stackrel{\mathrm{id}\otimes \Delta}{\xrightarrow{\hspace*{1.5cm}}} &  \mathcal{G}(\raisebox{-0.15cm}{\cudarcoo{0.3}}) & \stackrel{\mathcal{G}(\raisebox{-0.15cm}{\hsaddle{0.3}})\otimes \mathrm{id}}{\xrightarrow{\hspace*{1.5cm}}} & \mathcal{G}(\raisebox{-0.15cm}{\cudarco{0.3}}) \\
    w\otimes v_{+} & \xmapsto{\hspace*{1cm} } & w\otimes v_{+}\otimes v_{-}+w\otimes v_{-}\otimes v_{+} & \xmapsto{\hspace*{1cm} } & w\otimes v_{-} \\
    w\otimes v_{-} & \xmapsto{\hspace*{1cm} } & w\otimes v_{-}\otimes v_{-} & \xmapsto{\hspace*{1cm} } & 0.
  \end{array}
\end{equation*}
The desired result follows. The remaining verifications are straightforward.
\end{proof}

Note that the relations $(S)$, $(T)$, and $(4Tu)$ occur at the components of cobordism between closed curves. Therefore, the functor $\mathcal{G}:\mathcal{C}ob^{3}(B) \to \mathcal{M}od_{\mathbf{k}}$ can descend to a functor $\mathcal{G}:\mathbf{k}\mathcal{C}ob^{3}_{/l}(B) \to \mathcal{M}od_{\mathbf{k}}$ from the additive category $\mathbf{k}\mathcal{C}ob^{3}_{/l}(B)$ to the abelian category $\mathcal{M}od_{\mathbf{k}}$. Thus we can obtain a functor $\mathcal{G}^{\bullet}:\mathbf{Ch}^{\bullet}(\mathrm{Mat}(\mathbf{k}\mathcal{C}ob^3_{/l}(B)))\to \mathbf{Ch}^{\bullet}(\mathcal{M}od_{\mathbf{k}})$ between the category of cochain complexes.
Now, we will give the detailed construction of the cochain complex of $\mathbf{k}$-module derived from $\mathcal{G}$. For a tangle $T$, let $\mathcal{G}[[T]]^{k}=\bigoplus\limits_{h(s)=k}\mathcal{G}(T_{s})$. And the map $\mathcal{G}(d)^{k}=\mathcal{G}(d^{k}):\mathcal{G}[[T]]^{k}\to \mathcal{G}[[T]]^{k+1}$ is given by $\mathcal{G}(d^{k})=\sum\limits_{\xi}(-1)^{\sgn(\xi)} \mathcal{G}(d_{\xi})$. Since $d^{k+1}\circ d^{k}=0$, we have $\mathcal{G}(d^{k+1})\circ\mathcal{G}(d^{k})=\mathcal{G}(d^{k+1}\circ d^{k})=0$. Hence, the construction $(\mathcal{G}[[T]]^{\ast},\mathcal{G}(d)^{\ast})$ is a cochain complex. Let $\mathcal{G}Kh^{p}(T)=\mathcal{G}[[T]]^{p -n_{-}}$ and $\mathcal{G}(d_{T})^{p}=\mathcal{G}(d)^{p-n_{-}}$. Thus, we have the following result.

\begin{proposition}
The construction $(\mathcal{G}Kh^{\ast}(T),\mathcal{G}(d_{T})^{\ast})$ is a cochain complex.
\end{proposition}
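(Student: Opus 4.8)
The plan is to recognize the statement as the image of an already-established cochain complex under an already-established functor, so that essentially no new work is required beyond bookkeeping of the height shift. Concretely, set $(Kh^{\ast}(T),d_{T}^{\ast})$ to be the Khovanov complex of $T$, viewed as a graded object over $\mathrm{Mat}(\mathbf{k}\mathcal{C}ob^3_{/l}(\partial T))$. By the theorem of Bar-Natan quoted above, the bracket complex $([[T]]^{\ast},d^{\ast})$ is a cochain complex over $\mathrm{Mat}(\mathbf{k}\mathcal{C}ob^3_{/l}(\partial T))$, i.e. $d^{k+1}\circ d^{k}=0$ there; and $(Kh^{\ast}(T),d_{T}^{\ast})$ is its height shift by the fixed integer $n_{+}-n_{-}$, namely $Kh^{p}(T)=[[T]]^{p+n_{+}-n_{-}}$ and $d_{T}^{p}=d^{p+n_{+}-n_{-}}$. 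Since a reindexing of a cochain complex by a constant shift is again a cochain complex — the identity $d_{T}^{p+1}\circ d_{T}^{p}=d^{p+1+n_{+}-n_{-}}\circ d^{p+n_{+}-n_{-}}=0$ is just the previous vanishing with $k=p+n_{+}-n_{-}$ — the Khovanov complex is an object of $\mathbf{Ch}^{\bullet}(\mathrm{Mat}(\mathbf{k}\mathcal{C}ob^3_{/l}(\partial T)))$.

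Next I would invoke the functor $\mathcal{G}^{\bullet}:\mathbf{Ch}^{\bullet}(\mathrm{Mat}(\mathbf{k}\mathcal{C}ob^3_{/l}(B)))\to \mathbf{Ch}^{\bullet}(\mathcal{M}od_{\mathbf{k}})$ obtained by extending $\mathcal{G}$ — functorial on $\mathbf{k}\mathcal{C}ob^3_{/l}(B)$ by the preceding proposition together with the observation that $(S)$, $(T)$ and $(4Tu)$ involve only closed components, where $\mathcal{G}$ agrees with $\mathcal{F}$ — first to the additive closure $\mathrm{Mat}(\mathbf{k}\mathcal{C}ob^3_{/l}(B))$ and then degreewise to cochain complexes. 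Applying $\mathcal{G}^{\bullet}$ to $(Kh^{\ast}(T),d_{T}^{\ast})$ produces precisely $(\mathcal{G}Kh^{\ast}(T),\mathcal{G}(d_{T})^{\ast})$, since $\mathcal{G}Kh^{p}(T)=\mathcal{G}[[T]]^{p+n_{+}-n_{-}}=\bigoplus_{h(s)=p+n_{+}-n_{-}}\mathcal{G}(T_{s})$ and $\mathcal{G}(d_{T})^{p}=\mathcal{G}(d)^{p+n_{+}-n_{-}}$. A functor between categories of cochain complexes carries objects to objects, so the output is a cochain complex over $\mathcal{M}od_{\mathbf{k}}$; equivalently, one checks directly that
\begin{equation*}
  \mathcal{G}(d_{T})^{p+1}\circ\mathcal{G}(d_{T})^{p}
  =\mathcal{G}\!\left(d_{T}^{p+1}\circ d_{T}^{p}\right)=\mathcal{G}(0)=0 ,
\end{equation*}
using functoriality of $\mathcal{G}$ on $\mathrm{Mat}(\mathbf{k}\mathcal{C}ob^3_{/l}(\partial T))$ and $\mathcal{G}(0)=0$. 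One should also note in passing that each $\mathcal{G}(d_{T})^{p}=\sum_{\xi}(-1)^{\sgn(\xi)}\mathcal{G}(d_{\xi})$ is genuinely a $\mathbf{k}$-module homomorphism, being a finite $\mathbf{k}$-linear combination of the maps $\mathcal{G}(d_{\xi})$, each a tensor product of the elementary maps $m,\Delta,\epsilon,\eta$ and the arc maps of $\mathcal{G}$ with identities on the remaining factors.

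I do not expect any genuine obstacle here: the substantive content — functoriality of $\mathcal{G}$ and the vanishing $d\circ d=0$ in the cobordism category — has been dealt with in the material preceding the statement, and all that remains is to track the constant height shift $n_{+}-n_{-}$ through the definitions. If one wants the argument to be fully self-contained rather than quoting "a functor sends complexes to complexes", the only line to write out is the displayed computation above, valid for every $p\in\mathbb{Z}$.
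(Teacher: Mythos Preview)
Your proposal is correct and follows essentially the same approach as the paper: the paper's argument (given in the paragraph immediately preceding the proposition, with no separate proof environment) is precisely that $d^{k+1}\circ d^{k}=0$ in $\mathrm{Mat}(\mathbf{k}\mathcal{C}ob^3_{/l}(\partial T))$, hence $\mathcal{G}(d^{k+1})\circ\mathcal{G}(d^{k})=\mathcal{G}(d^{k+1}\circ d^{k})=0$ by functoriality, and then one shifts by $n_{+}-n_{-}$. You have reproduced this reasoning with a bit more bookkeeping detail.
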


For any element $x$ in the cochain complex $\mathcal{G}Kh^{p}(T)$, we define the quantum grading of $x$ by $\Phi(x) = p + n_{+} - n_{-} + \theta(x)$, where $\theta(x)$ is obtained by taking $\theta(v_{+}) = 1$, $\theta(v_{-}) = -1$, and $\theta(w)=-1$.

\begin{lemma}[\cite{weibel1994introduction}]\label{lemma:functor}
Let $\mathcal{A}$ and $\mathcal{B}$ be additive categories. Any additive functor $F:\mathcal{A} \to \mathcal{B}$ induces an additive functor $F^{\bullet}:\mathbf{Ch}^{\bullet}(\mathcal{A}) \to \mathbf{Ch}^{\bullet}(\mathcal{B})$ that preserves homotopy equivalences.
\end{lemma}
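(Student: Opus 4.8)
The plan is to verify, in order, that $F^{\bullet}$ is well defined on objects and morphisms, that it is a functor, that it is additive, and finally that it carries homotopy equivalences to homotopy equivalences; the whole argument is a diagram chase that uses only the functoriality and additivity of $F$, with no appeal to elements (since $\mathcal{A}$ and $\mathcal{B}$ are arbitrary additive categories).

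First I would define $F^{\bullet}$ on objects by applying $F$ degreewise: the complex $\cdots \to \Omega^{r-1} \xrightarrow{d^{r-1}} \Omega^{r} \xrightarrow{d^{r}} \Omega^{r+1} \to \cdots$ in $\mathcal{A}$ is sent to $\cdots \to F\Omega^{r-1} \xrightarrow{Fd^{r-1}} F\Omega^{r} \xrightarrow{Fd^{r}} F\Omega^{r+1} \to \cdots$. This is again a complex because $Fd^{r+1}\circ Fd^{r} = F(d^{r+1}\circ d^{r}) = F(0) = 0$, the last equality being the elementary fact that an additive functor sends zero morphisms to zero morphisms. On morphisms, a chain map $f^{\ast}\colon (\Omega^{\ast}_{a}, d_{a}) \to (\Omega^{\ast}_{b}, d_{b})$ is sent to $(Ff^{r})_{r}$, and the identity $Ff^{r-1}\circ Fd_{a} = F(f^{r-1}\circ d_{a}) = F(d_{b}\circ f^{r}) = Fd_{b}\circ Ff^{r}$ shows this is a chain map. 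That $F^{\bullet}$ preserves identities and composites is immediate from the corresponding properties of $F$ in each degree, additivity of $F^{\bullet}$ on hom-sets follows from additivity of $F$ in each degree, and $F^{\bullet}$ respects the degreewise direct sums that make $\mathbf{Ch}^{\bullet}$ additive because $F$ preserves finite biproducts.

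For the final assertion I would first show $F^{\bullet}$ preserves chain homotopy of morphisms. If $f^{\ast},g^{\ast}\colon(\Omega^{\ast}_{a},d_{a})\to(\Omega^{\ast}_{b},d_{b})$ are homotopic via maps $h^{r}\colon \Omega^{r}_{a}\to \Omega^{r-1}_{b}$ with $f^{r}-g^{r} = d_{b}^{r-1}\circ h^{r} + h^{r+1}\circ d_{a}^{r}$, then applying $F$ and using additivity and functoriality yields $Ff^{r}-Fg^{r} = F(f^{r}-g^{r}) = Fd_{b}^{r-1}\circ Fh^{r} + Fh^{r+1}\circ Fd_{a}^{r}$, so $(Fh^{r})_{r}$ is a chain homotopy from $F^{\bullet}f^{\ast}$ to $F^{\bullet}g^{\ast}$. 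Consequently, if $f^{\ast}$ is a homotopy equivalence with homotopy inverse $g^{\ast}$, so $g^{\ast}\circ f^{\ast}\simeq \mathrm{id}$ and $f^{\ast}\circ g^{\ast}\simeq \mathrm{id}$, then $F^{\bullet}g^{\ast}\circ F^{\bullet}f^{\ast} = F^{\bullet}(g^{\ast}\circ f^{\ast})\simeq F^{\bullet}(\mathrm{id}) = \mathrm{id}$ and symmetrically, so $F^{\bullet}f^{\ast}$ is a homotopy equivalence.

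There is no real obstacle here; the only point requiring attention is to carry out each step diagrammatically rather than on elements, invoking at the appropriate places the two elementary facts that an additive functor preserves zero morphisms and finite biproducts. Since the statement is exactly the one recorded in \cite{weibel1994introduction}, I would keep the written proof short, essentially just displaying the two identities above.
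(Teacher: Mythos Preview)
Your proposal is correct and is exactly the standard argument. The paper does not give its own proof of this lemma; it simply records the statement with a citation to \cite{weibel1994introduction}, so there is nothing further to compare.
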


Recall that we have the functor $Kh_{B}: \mathcal{C}ob^{4}(B) \to \mathbf{Ch}^{\bullet}(\mathrm{Mat}(\mathbf{k}\mathcal{C}ob^3_{/l}(B)))$. By composing it with $\mathcal{G}^{\bullet}: \mathbf{Ch}^{\bullet}(\mathrm{Mat}(\mathbf{k}\mathcal{C}ob^3_{/l}(B))) \to \mathbf{Ch}^{\bullet}(\mathcal{M}od_{\mathbf{k}})$, we obtain the functor $\mathcal{G}^{\bullet}Kh_{B}: \mathcal{C}ob^{4}(B) \to \mathbf{Ch}^{\bullet}(\mathcal{M}od_{\mathbf{k}})$, which maps the category of tangles with boundary $B$ to the category of cochain complexes of $\mathbf{k}$-modules.

\begin{theorem}\label{theorem:invariant}
The functor $\mathcal{G}^{\bullet}Kh_{B}:\mathcal{C}ob^{4}(B)\to \mathbf{Ch}^{\bullet}(\mathcal{M}od_{\mathbf{k}})$ maps isotopy classes of tangles to homotopy classes of cochain complexes.
\end{theorem}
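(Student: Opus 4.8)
The plan is to obtain the statement purely formally, by composing the invariance that is already available one level below with the fact that $\mathcal{G}^{\bullet}$ preserves homotopy equivalences. Write the functor in question as the composite
\[
  \mathcal{C}ob^{4}(B) \xrightarrow{\ Kh_{B}\ } \mathbf{Ch}^{\bullet}\!\left(\mathrm{Mat}(\mathbf{k}\mathcal{C}ob^{3}_{/l}(B))\right) \xrightarrow{\ \mathcal{G}^{\bullet}\ } \mathbf{Ch}^{\bullet}(\mathcal{M}od_{\mathbf{k}}).
\]
Given isotopic tangles $T$ and $T'$ with boundary $B$, the invariance theorem for $Kh_{B}$ recalled above (\cite[Theorem~1]{bar2005khovanov}) supplies a chain homotopy equivalence $Kh_{B}(T) \simeq Kh_{B}(T')$ in $\mathbf{Ch}^{\bullet}(\mathrm{Mat}(\mathbf{k}\mathcal{C}ob^{3}_{/l}(B)))$. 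Applying $\mathcal{G}^{\bullet}$ and using that it carries homotopy equivalences to homotopy equivalences yields $\mathcal{G}^{\bullet}Kh_{B}(T) \simeq \mathcal{G}^{\bullet}Kh_{B}(T')$ in $\mathbf{Ch}^{\bullet}(\mathcal{M}od_{\mathbf{k}})$, which is exactly the assertion.

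To make this work I first need $\mathcal{G}$ to be an \emph{additive} functor out of $\mathbf{k}\mathcal{C}ob^{3}_{/l}(B)$, so that Lemma \ref{lemma:functor} applies. The preceding proposition gives that $\mathcal{G}\colon \mathcal{C}ob^{3}(B)\to \mathcal{M}od_{\mathbf{k}}$ is functorial; extending it $\mathbf{k}$-linearly on each Hom-set makes it a functor on the pre-additive category $\mathbf{k}\mathcal{C}ob^{3}(B)$, and additivity is immediate because this extension is $\mathbf{k}$-linear by construction. For the passage to the localization one checks that $\mathcal{G}$ annihilates the defining relations $(S)$, $(T)$ and $(4Tu)$; since these relations are supported on the closed-curve part of a cobordism, where $\mathcal{G}$ agrees verbatim with $\mathcal{F}$, and $\mathcal{F}$ already respects them, so does $\mathcal{G}$, and hence $\mathcal{G}$ descends to $\mathbf{k}\mathcal{C}ob^{3}_{/l}(B)$. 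As $\mathcal{M}od_{\mathbf{k}}$ has all finite biproducts, $\mathcal{G}$ then extends canonically to the additive closure $\mathrm{Mat}(\mathbf{k}\mathcal{C}ob^{3}_{/l}(B))$, sending a formal direct sum of objects to the direct sum of their images and a matrix of cobordisms to the matrix of the corresponding $\mathbf{k}$-module maps; this extension is again additive. Now Lemma \ref{lemma:functor} produces $\mathcal{G}^{\bullet}$ preserving homotopy equivalences, and $\mathcal{G}^{\bullet}Kh_{B}$ is a genuine functor on $\mathcal{C}ob^{4}(B)$; combining with the isotopy invariance of $Kh_{B}$ as in the first paragraph finishes the argument. (One then gets for free, composing with the homology functor as in Section \ref{section:tqft}, that $H\mathcal{G}^{\bullet}Kh_{B}$ is an isotopy invariant.)

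I expect the only non-formal point to be the verification that $\mathcal{G}$ descends to the localized category, i.e.\ that it respects $(S)$, $(T)$ and $(4Tu)$ — in particular the mixed configurations in $(4Tu)$ where some of the four distinguished disks lie on arc components rather than on circle components. This reduces to the corresponding fact for $\mathcal{F}$ precisely because the relevant local surgery takes place on the closed part of the cobordism, on which $\mathcal{G}$ and $\mathcal{F}$ are identical. Everything else — the $\mathbf{k}$-linearity of $\mathcal{G}$ on Hom-sets, compatibility with biproducts, the invocation of Lemma \ref{lemma:functor}, and the final composition of functors — is routine bookkeeping.
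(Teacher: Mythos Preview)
Your proposal is correct and follows essentially the same route as the paper's own proof: note that $\mathcal{G}$ is additive on $\mathbf{k}\mathcal{C}ob^{3}_{/l}(B)$, extend to the additive closure $\mathrm{Mat}(\mathbf{k}\mathcal{C}ob^{3}_{/l}(B))$, apply Lemma~\ref{lemma:functor} so that $\mathcal{G}^{\bullet}$ preserves homotopy equivalences, and then compose with Bar-Natan's invariance result for $Kh_{B}$. Your write-up is simply more explicit about the descent to the localization and the extension to $\mathrm{Mat}(-)$, which the paper disposes of in the paragraph preceding the theorem; the paper also cites \cite[Theorem~4]{bar2005khovanov} rather than Theorem~1, but this is a cosmetic difference.
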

\begin{proof}
Note that the functor $\mathcal{G}: \mathbf{k}\mathcal{C}ob^3_{/l}(B) \to \mathcal{M}od_{\mathbf{k}}$ is additive, and it extends to an additive functor $\mathrm{Mat}(\mathbf{k}\mathcal{C}ob^3_{/l}(B)) \to \mathcal{M}od_{\mathbf{k}}$. The desired result follows from a variant of \cite[Theorem 4]{bar2005khovanov} and Lemma \ref{lemma:functor}.
\end{proof}

\section{Planar tangles and persistent Khovanov homology}

In the study of persistent Khovanov homology for tangles, it is often the case that the boundary of the tangle does not remain fixed as the tangle evolves over persistence parameter. This presents a challenge for the application of persistence tangles. A natural approach is to consider that as the persistence parameter increases, the tangle at earlier times can be viewed as an interior part of the tangle at later times. The relationship between these two tangles can be described using operations induced by input planar tangles.

\subsection{Input planar tangle}

A \emph{$d$-input planar tangle} consists of a large output disk equipped with $d$ input disks, along with a collection of disjoint embedded arcs that are either closed or have endpoints on the boundary. These input disks are sequentially numbered from 1 to $d$, and both the input disks and the output disk are marked with $\ast$ as base points.

Let $\mathcal{T}(k)$ be the collection of all the classes of tangles with $k$ endpoints up to Reidemeister moves. Suppose $D$ is a $d$-input planar tangle such that there are $k_{r}$ endpoints of arcs on the $r$-th input disk in $D$ for $R=1,2,\dots,d$. Then one has an operation
\begin{equation*}
  D:\mathcal{T}(k_{1})\times \cdots \times \mathcal{T}(k_{d}) \to \mathcal{T}(k),
\end{equation*}
which embeds $d$ tangles, each with $k_{1}, \dots, k_{d}$ endpoints respectively, into the $d$-input planar tangle $D$ by connecting their endpoints, resulting in a new tangle. Let $\mathcal{P}(k)$ be the vector space generated by the elements in $\mathcal{T}(k)$. Then the collection $\{\mathcal{P}(k)\}_{k \geq 0}$, equipped with the operation $D$, forms a planar algebra. For more details on planar algebras, refer to \cite{jones2021planar}.

Now, let $D$ be a 1-input planar tangle. We can obtain an operation
\begin{equation*}
  D:\mathcal{T}(k_{1}) \to \mathcal{T}(k)
\end{equation*}
by embedding a tangle $T$ into $D$, resulting in a larger tangle $D(T)$, with $T$ as a part of $D(T)$, as shown in Figure \ref{figure:input}.
\begin{figure}[htb!]
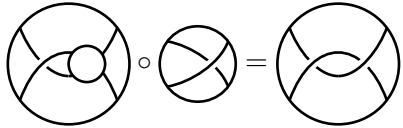

  \centering
  $\Tanglo[1.6]\circ \Tanglx[1]= \Tangle[1.6]$\\
  \caption{An example of the operation of a 1-input planar tangle.}\label{figure:input}
\end{figure}

Our goal in this work is to establish the distance-based persistent Khovanov homology of tangles. A natural idea is to determine whether we can obtain a morphism $Kh(\mathcal{T}(k_{1})) \to Kh(\mathcal{T}(k))$ of cochain complexes. Unfortunately, it is challenging to construct such a morphism of cochain complexes. Even with the constructions from TQFT, we have not been able to establish a morphism $\mathcal{F}^{\bullet}Kh(\mathcal{T}(k_{1})) \to \mathcal{F}^{\bullet}Kh(\mathcal{T}(k))$ of cochain complexes.

\subsection{The category $\mathcal{P}la$}

Consider the category $\mathcal{P}la$ of tangles, where the objects are tangles and the morphisms are given by maps $T \to T' = D(T)$ for some 1-input planar tangle $D$. In this setting, any morphism in $\mathcal{P}la$ can be viewed as an inclusion of 1-dimensional manifolds, where arcs are mapped to either arcs or circles, and circles are mapped to circles.

For any morphism $T \to T'$, we can associate cochain complexes $(\mathcal{G}Kh^{\ast}(T), \mathcal{G}(d_{T})^{\ast})$ and $(\mathcal{G}Kh^{\ast}(T'), \mathcal{G}(d_{T'})^{\ast})$. Our goal is to construct a map $\Psi: (\mathcal{G}Kh^{\ast}(T), \mathcal{G}(d_{T})^{\ast}) \to (\mathcal{G}Kh^{\ast}(T'), \mathcal{G}(d_{T'})^{\ast})$.

One essential difficulty arises when $T'$ has more connected components than $T$, in which case the induced map on chain complexes cannot be defined. For instance, if we embed a single arc into a tangle consisting of two arcs, it is unclear how to interpret the corresponding map $W \to W \otimes W$. To avoid such situations, we require that in the morphism $T \to T'$, the target $T'$ does not contain more connected components or crossings than the source $T$. Under this restriction, we obtain a subcategory of $\mathcal{P}la$, which we denote by $\mathcal{P}la^{0}$.

In the category $\mathcal{P}la^{0}$, the morphisms $T \to T'$ are tangles with the same number of crossings, such that each crossing of $T$ corresponds uniquely to a crossing of $T'$. Consequently, the induced morphism on chain complexes is determined at each state $s$ by $T_s \longrightarrow T'_s$. Therefore, the restriction of the map $\Psi$ to each state is defined on arcs or circles.

The map $\Psi$ is a $\mathbf{k}$-module homomorphism defined as follows:
\begin{equation*}
  \Psi: \mathcal{G}(\raisebox{-0.15cm}{\cudarcc{0.3}}) \to \mathcal{G}(\raisebox{-0.15cm}{\cscircle{0.3}}), \quad w \mapsto v_{-},
\end{equation*}
and $\Psi$ acts as the identity on the identity maps $\raisebox{-0.15cm}{\cscircle{0.3}}\to\raisebox{-0.15cm}{\cscircle{0.3}}$ and $\raisebox{-0.15cm}{\cudarcc{0.3}}\to\raisebox{-0.15cm}{\cudarcc{0.3}}$ of independent components. In other words, $\Psi$ maps arcs to arcs and circles to circles wherever the structure of the tangle is preserved, and performs the specified homomorphism on components that transition between arcs and circles.

\begin{theorem}
The map $\Psi:\mathcal{G}Kh^{\ast}(T)\to \mathcal{G}Kh^{\ast}(T')$ is a morphism of cochain complexes.
\end{theorem}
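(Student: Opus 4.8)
The plan is to verify that $\Psi$ commutes with the differentials, i.e.\ that $\Psi \circ \mathcal{G}(d_T)^k = \mathcal{G}(d_{T'})^k \circ \Psi$ for every $k$. Since both differentials are alternating sums $\sum_\xi (-1)^{\operatorname{sgn}(\xi)} \mathcal{G}(d_\xi)$ indexed by the edges $\xi$ of the common state cube of $T$ and $T'$ (the crossings of $T'$ that are "new", coming from the planar tangle $D$, are crossing-free — $D$ contributes only arcs and circles — so $T$ and $T'$ have the same set of crossings and hence the same state cube and the same signs), it suffices to check, edge by edge, that $\Psi \circ \mathcal{G}(d_\xi) = \mathcal{G}(d_\xi') \circ \Psi$, where $d_\xi$ is the cobordism morphism in the $\mathcal{C}ube(T)$ picture and $d_\xi'$ the corresponding one in $\mathcal{C}ube(T')$. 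Each $d_\xi$ is a saddle cobordism performed at one crossing, together with identity cobordisms on all other components; because $\mathcal{G}$ is defined component-wise on tensor factors, and $\Psi$ is likewise defined component-wise (identity on components whose arc/circle type is unchanged, and $w \mapsto v_-$ on components that change from an independent arc to a circle), the verification reduces to a finite case analysis on the local type of the single component(s) touched by that saddle.

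Concretely, I would enumerate the possibilities for how a saddle $d_\xi$ interacts with the image of $D$. The cases are: (i) the saddle merges or splits circles among the components that are already circles in $T$ — here $\mathcal{G}(d_\xi) = \mathcal{G}(d_\xi')$ is $m$ or $\Delta$, $\Psi$ is the identity on all factors involved, and commutativity is immediate; (ii) the saddle acts on local arcs inside a round box (the $\raisebox{-0.15cm}{\udarc{0.3}} \to \raisebox{-0.15cm}{\lrarc{0.3}}$ type), where the behavior is dictated by the saddle maps of $\mathcal{G}$ on the $W$-factors, and one checks directly that applying $\Psi$ (sending the relevant $w$ to $v_-$) before or after the saddle yields the same element; (iii) the "mixed" case, where the saddle in $T$ has a purely-arc local picture but, after including into $T'$ via $D$, the same saddle becomes (part of) a merge/split of circles — this is where the identity $\Psi(w) = v_-$ is forced and one must confront the three distinguished saddle maps $\mathcal{G}(\raisebox{-0.15cm}{\hsaddle{0.3}}) : W\otimes W \to W\otimes W$, $\mathcal{G}(\raisebox{-0.15cm}{\saddle{0.3}}) : W \to W\otimes V$, and $\mathcal{G}(\raisebox{-0.15cm}{\hsaddle{0.3}}) : W\otimes V \to W$, comparing each against the corresponding $\mathcal{F}$-maps $0$ on closed curves, $\Delta$, and $m$. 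For instance, for the $W \to W\otimes V$ saddle: in $T$ it sends $w \mapsto w\otimes v_-$, which $\Psi$ carries to $v_-\otimes v_-$, whereas $\Psi$ first sends $w \mapsto v_-$ and then the circle-splitting $\Delta$ sends $v_- \mapsto v_-\otimes v_-$; these agree. The annihilation cases ($w\otimes w \mapsto 0$ versus the $0$ map on closed curves, and $w\otimes v_- \mapsto 0$ versus $m(v_-\otimes v_-) = 0$) likewise match, and the surviving case $w\otimes v_+ \mapsto w$ versus $m(v_-\otimes v_+) = v_-$ matches under $\Psi$.

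The main obstacle — and the real content of the theorem — is precisely case (iii): one must be confident that the somewhat ad hoc definition of $\mathcal{G}$ on arc-saddles (with its degree-$(-1)$ generator $w$) was engineered so that, under the substitution $w \rightsquigarrow v_-$, it reproduces exactly the circle-saddle maps $m$ and $\Delta$ (and the $0$-maps) that $\mathcal{G}$ inherits from $\mathcal{F}$. In other words, one is checking that $\Psi$ is compatible not just with one saddle but with every way a planar-tangle inclusion can turn arcs into circles — including the possibility that two arc-endpoints on the same input-disk boundary get joined by $D$ into a single circle, which is the situation the displayed map $\Psi : \mathcal{G}(\raisebox{-0.15cm}{\cudarcc{0.3}}) \to \mathcal{G}(\raisebox{-0.15cm}{\cscircle{0.3}})$, $w\mapsto v_-$, is designed to model. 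Once the finite table of saddle-versus-saddle comparisons is checked (and one notes that $\Psi$ also manifestly respects the quantum grading, since $\deg w = \deg v_- = -1$), chain-map-ness follows, and the remaining bookkeeping — that $\Psi$ respects the direct-sum decompositions $\mathcal{G}Kh^k = \bigoplus_{h(s)=k}\mathcal{G}(T_s)$ and the common height shift by $n_+ - n_-$ — is routine since $T$ and $T'$ share the same $n_\pm$.
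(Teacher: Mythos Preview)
Your proposal is correct and follows essentially the same approach as the paper: both reduce the chain-map condition to the edge-by-edge identity $\Psi\circ\mathcal{G}(d_\xi)=\mathcal{G}(d_\xi')\circ\Psi$ (using that the 1-input planar tangle $D$ is crossing-free, so $T$ and $T'$ share the same state cube and signs), and then verify this by a finite case analysis on the local component types touched by the saddle. The paper packages the nontrivial checks into four explicit commutative squares (Diagrams I--IV), whereas you group them into cases (i)--(iii); your explicit computation for the $W\to W\otimes V$ saddle versus $\Delta$ is exactly the paper's Diagram~IV, and your remarks on the $W\otimes V\to W$ and $W\otimes W\to W\otimes W$ saddles correspond to Diagrams~III and I--II, so the content is the same.
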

\begin{proof}
To prove $\Psi$ is a morphism of cochain complexes, it suffices to show $\mathcal{G}(d_{\xi})\circ \Psi=\Psi\circ \mathcal{G}(d_{\xi})$. Here, $d_{\xi}:T_{s}\to T_{s'}$ is a saddle map given by the edge
$\xi = (\xi_1, \dots, \xi_{i-1}, \star, \xi_{i+1}, \dots, \xi_{|\mathcal{X}(T)|}) \in \{0, 1, \star\}^{|\mathcal{X}(T)|}$ in the state cube that connect a state $s$ with a neighboring state $s'$ that differs by one position. Here, $\xi_{j} \in \{0, 1\}$ for $j\neq i$ and $\star$ indicates an edge connecting $0$ to $1$. Hence, we need to prove that the following four diagrams are commutative.
\begin{equation*}
\text{I:}\quad
  \xymatrix{
  \mathcal{G}(\raisebox{-0.15cm}{\clrarc{0.3}})\ar@{->}[rr]^{\mathcal{G}(\raisebox{-0.15cm}{\hsaddle{0.3}})}\ar@{->}[d]^{\Psi}&& \mathcal{G}(\raisebox{-0.15cm}{\cudarc{0.3}})\ar@{->}[d]^{\Psi}\\
  \mathcal{G}(\raisebox{-0.15cm}{\cudarco{0.3}})\ar@{->}[rr]^{\mathcal{G}(\raisebox{-0.15cm}{\hsaddle{0.3}})}& &\mathcal{G}(\raisebox{-0.15cm}{\cudarcc{0.3}})
  }\quad
  \text{II:}\quad
  \xymatrix{
  \mathcal{G}(\raisebox{-0.15cm}{\cudarc{0.3}})\ar@{->}[rr]^{\mathcal{G}(\raisebox{-0.15cm}{\saddle{0.3}})}\ar@{->}[d]^{\Psi}&& \mathcal{G}(\raisebox{-0.15cm}{\clrarc{0.3}})\ar@{->}[d]^{\Psi}\\
  \mathcal{G}(\raisebox{-0.15cm}{\cudarcc{0.3}})\ar@{->}[rr]^{\mathcal{G}(\raisebox{-0.15cm}{\saddle{0.3}})}& &\mathcal{G}(\raisebox{-0.15cm}{\cudarco{0.3}})
  }
\end{equation*}
\begin{equation*}
\text{III:}\quad
  \xymatrix{
  \mathcal{G}(\raisebox{-0.15cm}{\cudarco{0.3}})\ar@{->}[rr]^{\mathcal{G}(\raisebox{-0.15cm}{\hsaddle{0.3}})}\ar@{->}[d]^{\Psi}&& \mathcal{G}(\raisebox{-0.15cm}{\cudarcc{0.3}})\ar@{->}[d]^{\Psi}\\
  \mathcal{G}(\raisebox{-0.15cm}{\ccircle{0.3}})\ar@{->}[rr]^{m}& &\mathcal{G}(\raisebox{-0.15cm}{\cscircle{0.3}})
  }\quad
  \text{IV:}\quad
  \xymatrix{
  \mathcal{G}(\raisebox{-0.15cm}{\cudarcc{0.3}})\ar@{->}[rr]^{\mathcal{G}(\raisebox{-0.15cm}{\saddle{0.3}})}\ar@{->}[d]^{\Psi}&& \mathcal{G}(\raisebox{-0.15cm}{\cudarco{0.3}})\ar@{->}[d]^{\Psi}\\
  \mathcal{G}(\raisebox{-0.15cm}{\cscircle{0.3}})\ar@{->}[rr]^{\Delta}& &\mathcal{G}(\raisebox{-0.15cm}{\ccircle{0.3}})
  }
\end{equation*}
We will only verify the third diagram. A straightforward calculation shows that
\begin{equation*}
  \begin{array}{ccccc}
    \mathcal{G}(\raisebox{-0.15cm}{\cudarco{0.3}}) & \stackrel{\mathcal{G}(\raisebox{-0.15cm}{\hsaddle{0.3}})}{\xrightarrow{\hspace*{1.5cm}}} & \mathcal{G}(\raisebox{-0.15cm}{\cudarcc{0.3}}) & \stackrel{\Psi}{\xrightarrow{\hspace*{1.5cm}}} & \mathcal{G}(\raisebox{-0.15cm}{\cscircle{0.3}}) \\
    w\otimes v_{+} & \xmapsto{\hspace*{1cm} } & w & \xmapsto{\hspace*{1cm} } & v_{-} \\
    w\otimes v_{-} & \xmapsto{\hspace*{1cm} } & 0 & \xmapsto{\hspace*{1cm} } & 0
  \end{array}
\end{equation*}
and
\begin{equation*}
  \begin{array}{ccccc}
    \mathcal{G}(\raisebox{-0.15cm}{\cudarco{0.3}}) & \stackrel{\Psi}{\xrightarrow{\hspace*{1.5cm}}} & \mathcal{G}(\raisebox{-0.15cm}{\ccircle{0.3}}) & \stackrel{m}{\xrightarrow{\hspace*{1.5cm}}} & \mathcal{G}(\raisebox{-0.15cm}{\cscircle{0.3}}) \\
    w\otimes v_{+} & \xmapsto{\hspace*{1cm} } & v_{-}\otimes v_{+} & \xmapsto{\hspace*{1cm} } & v_{-} \\
    w\otimes v_{-} & \xmapsto{\hspace*{1cm} } & v_{-}\otimes v_{-} & \xmapsto{\hspace*{1cm} } & 0.
  \end{array}
\end{equation*}
Thus, Diagram III commutes. The commutativity of the other diagrams can be verified similarly, following analogous calculations.
\end{proof}

\begin{example}
Now, we will give an example of tangles with more independent components. Consider the following diagram.
\begin{equation*}
  \xymatrix{
  \mathcal{G}(\raisebox{-0.15cm}{\caoa{0.3}})\ar@{->}[r]^{\Psi}\ar@{->}[d]_{\mathrm{id}\otimes\mathcal{G}(\raisebox{-0.15cm}{\hsaddle{0.3}})}&\mathcal{G}(\raisebox{-0.15cm}{\cudarcoo{0.3}})
  \ar@{->}[r]^{\Psi}\ar@{->}[d]^{\mathcal{G}(\raisebox{-0.15cm}{\hsaddle{0.3}})\otimes \mathrm{id}}&\mathcal{G}(\raisebox{-0.15cm}{\cooo{0.3}})\ar@{->}[d]^{m\otimes \mathrm{id}}\\
  \mathcal{G}(\raisebox{-0.15cm}{\caarc{0.3}})\ar@{->}[r]^{\Psi}&\mathcal{G}(\raisebox{-0.15cm}{\cudarco{0.3}})\ar@{->}[r]^{\Psi}&\mathcal{G}(\raisebox{-0.15cm}{\ccircle{0.3}})
  }
\end{equation*}
It is worth noting that $\mathcal{G}(\raisebox{-0.15cm}{\hsaddle{0.3}})\otimes \mathrm{id}=\mathrm{id}\otimes m$ and $m\otimes \mathrm{id}=\mathrm{id}\otimes m$. The corresponding element mappings and their associated diagrams are listed as follows.
\begin{equation*}
  \xymatrix{
 w\otimes v_{+}\otimes w\ar@{|->}[r]\ar@{|->}[d]&v_{-}\otimes v_{+}\otimes w\ar@{|->}[d]\ar@{|->}[r]&v_{-}\otimes v_{+}\otimes v_{-}\ar@{|->}[d]\\
 w\otimes w\ar@{|->}[r]&v_{-}\otimes w\ar@{|->}[r]&v_{-}\otimes v_{-}
  }
  \end{equation*}
  \begin{equation*}
  \xymatrix{
 w\otimes v_{-}\otimes w\ar@{|->}[r]\ar@{|->}[d]& w\otimes v_{-}\otimes v_{-}\ar@{->}[d]\ar@{->}[r]&v_{-}\otimes v_{-}\otimes v_{-}\ar@{|->}[d]\\
 0\ar@{|->}[r]&0\ar@{|->}[r]&0
  }
\end{equation*}
The calculation shows that the above diagram of $\mathbf{k}$-modules is commutative.
\end{example}

\begin{theorem}
The construction $\mathcal{G}^{\bullet}Kh:\mathcal{P}la^{0}\to \mathbf{Ch}^{\bullet}(\mathcal{M}od_{\mathbf{k}})$ is functorial.
\end{theorem}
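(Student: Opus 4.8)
The plan is to verify the two axioms of a functor for $\mathcal{G}^{\bullet}Kh$: preservation of identity morphisms and compatibility with composition. That $\mathcal{G}^{\bullet}Kh$ is already defined on morphisms — i.e. that the map $\Psi=\mathcal{G}^{\bullet}Kh(f)$ attached to a morphism $f\colon T\to T'=D(T)$ of $\mathcal{P}la$ is a morphism of cochain complexes — is exactly the preceding theorem, so only these two axioms remain. First I would record the structural observation that makes the whole argument componentwise: a $1$-input planar tangle $D$ is built from disjoint embedded arcs and circles and so carries no crossings; hence $\mathcal{X}(D(T))=\mathcal{X}(T)$, the integers $n_{\pm}$, the height function, and the entire state cube of $T'=D(T)$ are canonically identified with those of $T$, and $T'_{s}=D(T_{s})$ for every state $s$. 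Consequently $\Psi$ is the direct sum over $s$ of the maps $\mathcal{G}(T_{s}\hookrightarrow D(T_{s}))$, and by the definitions of $\mathcal{G}$ and of $\Psi$ each of these is a tensor product, over the connected components, of the elementary maps $\mathrm{id}_{V}$ (a circle mapped to a circle), $\mathrm{id}_{W}$ (an arc mapped to an arc) and $w\mapsto v_{-}$ (an arc whose two ends get joined into a circle), together with the unit inclusions of $\mathcal{G}$, namely $\mathbf{k}\to V,\ 1\mapsto v_{+}$ and $\mathbf{k}\to W,\ 1\mapsto w$, for those components of $D$ itself that are not in the image of $T$. This reduces both axioms to identities among these elementary maps.

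For identities: the identity morphism $\mathrm{id}_{T}$ is represented by the trivial $1$-input planar tangle, whose region between the input and output disks carries only the straight strands joining the two boundary circles; for it no arc is closed up and no new component is created, so every elementary map is an identity and $\Psi=\mathrm{id}_{\mathcal{G}Kh^{\ast}(T)}$. Hence $\mathcal{G}^{\bullet}Kh(\mathrm{id}_{T})=\mathrm{id}$.

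For composition: given $T\xrightarrow{D_{1}}T'$ and $T'\xrightarrow{D_{2}}T''$, the associativity (operad) structure of the planar algebra, for which I would cite \cite{jones2021planar}, gives a $1$-input planar tangle $D_{2}\circ D_{1}$ with $D_{2}(D_{1}(T))=(D_{2}\circ D_{1})(T)$, so $\mathcal{G}^{\bullet}Kh$ of the composite morphism is the chain map $\Psi$ attached to $D_{2}\circ D_{1}$. Because each $\Psi$ is a tensor product of elementary component maps and tensoring commutes with composition, it suffices to check, for a single connected component of $T_{s}$ followed along $T_{s}\hookrightarrow T'_{s}\hookrightarrow T''_{s}$, that the elementary map of $D_{2}\circ D_{1}$ equals the elementary map of $D_{2}$ composed after that of $D_{1}$. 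The only cases are $\text{arc}\to\text{arc}\to\text{arc}$, $\text{arc}\to\text{arc}\to\text{circle}$, $\text{arc}\to\text{circle}\to\text{circle}$ and $\text{circle}\to\text{circle}\to\text{circle}$, requiring respectively $\mathrm{id}\circ\mathrm{id}=\mathrm{id}$, $(w\mapsto v_{-})\circ\mathrm{id}_{W}=(w\mapsto v_{-})$, $\mathrm{id}_{V}\circ(w\mapsto v_{-})=(w\mapsto v_{-})$ and $\mathrm{id}_{V}\circ\mathrm{id}_{V}=\mathrm{id}_{V}$; the components created by $D_{1}$ or by $D_{2}$ are handled identically via $\mathrm{id}_{V}\circ(\mathbf{k}\to V)=(\mathbf{k}\to V)$ and $\mathrm{id}_{W}\circ(\mathbf{k}\to W)=(\mathbf{k}\to W)$. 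This gives $\mathcal{G}^{\bullet}Kh(D_{2}\circ D_{1})=\mathcal{G}^{\bullet}Kh(D_{2})\circ\mathcal{G}^{\bullet}Kh(D_{1})$, and functoriality follows.

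The step I expect to take the most care is not any of these one-line identities but the surrounding well-definedness. One must confirm that $\Psi$ depends only on the morphism of $\mathcal{P}la$ — that is, on the underlying inclusion $T\hookrightarrow T'$ of $1$-manifolds, with its bookkeeping of which arcs survive as arcs, which close up into circles, and which components are newly introduced — and not on the particular planar tangle $D$ realizing it; and in the composition step one must see that a component which is still "new" at the intermediate stage $T'$ receives the same value whether $T''$ is assembled in one step or in two. If, moreover, the objects of $\mathcal{P}la$ are taken up to isotopy, one further needs $\Psi$ to be compatible with the chain homotopy equivalences of Theorem \ref{theorem:invariant}; this follows from naturality of those equivalences in the region outside the embedded tangle. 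Once the componentwise description of $\Psi$ from the first paragraph is pinned down carefully, all of these consistency points — and hence the theorem — reduce to the elementary list above.
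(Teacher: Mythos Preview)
Your approach is essentially the paper's own: reduce $\Psi_{D'}\circ\Psi_{D}=\Psi_{D'\circ D}$ to a componentwise check on the smoothings $T_{s}\hookrightarrow T'_{s}\hookrightarrow T''_{s}$ and verify the handful of elementary cases, the paper singling out exactly your two nontrivial ones (arc$\to$arc$\to$circle and arc$\to$circle$\to$circle) and declaring the rest straightforward. You are somewhat more thorough---explicitly treating the identity axiom, the tensor factors for components newly created by $D$, and the well-definedness of $\Psi$ with respect to the choice of planar tangle---none of which the paper spells out; one case your elementary list still elides, as does the paper's definition of $\Psi$, is when several arcs of $T_{s}$ are joined by $D$ into a single arc or circle of $T'_{s}$, so that the map on that component is $W^{\otimes k}\to W$ or $W^{\otimes k}\to V$ rather than one of the listed elementary maps.
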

\begin{proof}
Let $T\stackrel{D}{\to} T'\stackrel{D'}{\to} T''$ be morphisms of tangles in the category $\mathcal{P}la^{0}$. We need to prove that the following diagram commutes:
\begin{equation*}
  \xymatrix{
  \mathcal{G}^{\bullet}Kh(T)\ar@{->}[rr]^{\Psi_{D}}\ar@{->}[rd]_{\Psi_{D'\circ D}}&&\mathcal{G}^{\bullet}Kh(T')\ar@{->}[ld]^{\Psi_{D'}}\\
  &\mathcal{G}^{\bullet}Kh(T'').&
  }
\end{equation*}
Here, $D'\circ D$ is the composition of morphisms in the category $\mathcal{P}la^{0}$. In other words, we need to prove that $\Psi_{D'\circ D}=\Psi_{D'}\circ \Psi_{D}$. It suffices to prove the diagram
\begin{equation*}
  \xymatrix{
  \mathcal{G}(T)\ar@{->}[rr]^{\mathcal{G}(T\to T')}\ar@{->}[rd]_{\mathcal{G}(T\to T'')}&&\mathcal{G}(T')\ar@{->}[ld]^{\Psi_{\mathcal{G}(T'\to T'')}}\\
  &\mathcal{G}(T'')&
  }
\end{equation*}
is commutative. We only need to verify the commutativity for the two cases of morphisms $T \to T' \to T''$:
\begin{equation*}
  \raisebox{-0.15cm}{\cudarcc{0.3}} \to \raisebox{-0.15cm}{\cscircle{0.3}} \to \raisebox{-0.15cm}{\cscircle{0.3}},\quad \raisebox{-0.15cm}{\cudarcc{0.3}} \to \raisebox{-0.15cm}{\cudarcc{0.3}} \to \raisebox{-0.15cm}{\cscircle{0.3}}.
\end{equation*}
This follows from a straightforward step-by-step computation. The remaining part of the proof can be checked directly.
\end{proof}

It is worth noting that, at present, there is no definition of isotopy for tangles with different boundaries. Consequently, we do not have a result stating that the functor
$\mathcal{G}^{\bullet}Kh:\mathcal{P}la^{0}\to \mathbf{Ch}^{\bullet}(\mathcal{M}od_{\mathbf{k}})$
maps isotopy classes of tangles to homotopy classes of cochain complexes.

\subsection{Homology functors for tangles}

In the previous sections, we introduced a new construction $\mathcal{G}:\mathcal{C}ob^{3}(B)\to\mathcal{M}od_{\mathbf{k}}$ for tangles. This construction is functorial and leads to two functors: $\mathcal{G}^{\bullet}Kh_{B} : \mathcal{C}ob^{4}(B) \to \mathbf{Ch}^{\bullet}(\mathcal{M}od_{\mathbf{k}})$ and $\mathcal{G}^{\bullet}Kh:\mathcal{P}la^{0}\to\mathbf{Ch}^{\bullet}(\mathcal{M}od_{\mathbf{k}})$. The functor $\mathcal{G}^{\bullet}Kh_{B}$ is a tangle invariant up to homotopy equivalence, but it has limitations for applications because it requires the boundaries of tangles to be fixed. In contrast, although functor $\mathcal{G}^{\bullet}Kh$ does not capture tangle invariants, it  has greater potential for application.

For a given tangle $T$, the constructions $\mathcal{G}^{\bullet}Kh_{\partial T}(T)$ and $\mathcal{G}^{\bullet}Kh(T)$ produce the same cochain complex. Thus, although $\mathcal{G}^{\bullet}Kh_{\partial T}$ and $\mathcal{G}^{\bullet}Kh$ are different functors, this does not impact the computation of the Khovanov homology of tangles. For practical purposes, we will use the homology functor associated with $\mathcal{G}^{\bullet}Kh$.

\begin{definition}
Let $T$ be a tangle. The \emph{Khovanov homology of $T$ associated with $\mathcal{G}$} is defined by
\begin{equation*}
  H^{p}(T;\mathcal{G})=H^{p}(\mathcal{G}^{\bullet}Kh(T)),\quad p\in \mathbb{Z}.
\end{equation*}
\end{definition}
The Khovanov homology associated with $\mathcal{G}$ is a functor $H^{p}(-;\mathcal{G}):\mathcal{P}la^{0}\to \mathcal{M}od_{\mathbf{k}}$. Moreover, if $\partial T=\emptyset$, the Khovanov homology associated with $\mathcal{G}$ reduces to the Khovanov homology of links, that is, $H^{p}(T;\mathcal{G})=H^{p}(T;\mathcal{F})$ for any $p$.
The Khovanov homology of tangles associated with $\mathcal{G}$ can be explicitly computed. The following computation provides a detailed example.

\begin{example}
Consider the tangle $T=\raisebox{-0.4cm}{\lcrossingarc{0.2}}$. The corresponding cochain complex $Kh(T)$ of $T$ in $\mathbf{Ch}^{\bullet}(\mathrm{Mat}(\mathbf{k}\mathcal{C}ob^3_{/l}(\partial T))$ is described as follows:
\begin{equation*}
  \xymatrix{
  0\ar@{->}[r]&\underset{-1}{\raisebox{-0.15cm}{\cudarcc{0.3}}}\ar@{->}[r]^{\raisebox{-0.15cm}{\saddle{0.3}}}& \underset{0}{\raisebox{-0.15cm}{\cudarco{0.3}}}\ar@{->}[r]&0.
  }
\end{equation*}
The cochain complex $Kh^{\ast}(T)$ collapses at heights $-1$ and $0$. The only nontrivial differential is $d^{-1}=\raisebox{-0.15cm}{\saddle{0.3}}:Kh^{-1}(T)\to Kh^{0}(T)$. Applying to the functor $\mathcal{G}$, we have a cochain complex of $\mathbf{k}$-modules
\begin{equation*}
  \xymatrix{
  0\ar@{->}[r]&W\ar@{->}[r]^-{d}& W\otimes V\ar@{->}[r]&0.
  }
\end{equation*}
Here, $dw=w\otimes v_{-}$. A straightforward calculation shows that
\begin{equation*}
  H^{p}(T;\mathcal{G})=\left\{
                         \begin{array}{ll}
                           \mathbf{k}\{[w\otimes v_{+}]\}, & \hbox{$p=0$;} \\
                           0, & \hbox{otherwise.}
                         \end{array}
                       \right.
\end{equation*}
Recall that $\deg w=-1$. Then the quantum grading of $w\otimes v_{+}$ is given by $-1$. Now, consider the tangle $T'=\raisebox{-0.4cm}{\rcrossingarc{0.2}}$. Then the cochain complex $Kh(T')$ is described as follows:
\begin{equation*}
  \xymatrix{
  0\ar@{->}[r]& \underset{0}{\raisebox{-0.15cm}{\cudarco{0.3}}}\ar@{->}[r]^{\raisebox{-0.15cm}{\hsaddle{0.3}}}&\underset{1}{\raisebox{-0.15cm}{\cudarcc{0.3}}}\ar@{->}[r]&0.
  }
\end{equation*}
The differential at dimension $0$ is given by $d^{0}=\raisebox{-0.15cm}{\saddle{0.3}}:Kh^{0}(T)\to Kh^{1}(T)$. Thus, we have a cochain complex of $\mathbf{k}$-modules
\begin{equation*}
  \xymatrix{
  0\ar@{->}[r]&W\otimes V\ar@{->}[r]^-{d}& W\ar@{->}[r]&0,
  }
\end{equation*}
where $d(w\otimes v_{+})=w$ and $d(w\otimes v_{-})=0$. The corresponding Khovanov homology is
\begin{equation*}
  H^{p}(T';\mathcal{G})=\left\{
                         \begin{array}{ll}
                           \mathbf{k}\{[w\otimes v_{-}]\}, & \hbox{$p=0$;} \\
                           0, & \hbox{otherwise.}
                         \end{array}
                       \right.
\end{equation*}
The quantum grading of $w\otimes v_{-}$ is $-1$. Now, consider the tangle $T''$ consisting of a single arc. It is clear that the Khovanov homology is
\begin{equation*}
  H^{p}(T'';\mathcal{G})=\left\{
                         \begin{array}{ll}
                           \mathbf{k}\{[w]\}, & \hbox{$p=0$;} \\
                           0, & \hbox{otherwise.}
                         \end{array}
                       \right.
\end{equation*}
The quantum grading of $w$ here is also $-1$. In this example, $T$, $T'$, and $T''$ are equivalent up to Reidemeister moves. Their corresponding Khovanov homology groups are also identical, with even the quantum gradings of the homology generators being equal.
\end{example}

\subsection{Application}

In Section \ref{section:persistence_tangle}, we defined persistent Khovanov homology of tangles within the category of tangles with fixed boundaries. However, in practical applications, it is uncommon to encounter the filtration of tangles with fixed boundaries. In this section, we present an application that describes how, with a given tangle in a metric space, one can construct persistent tangles within the category $\mathcal{P}la^{0}$, thereby obtaining the persistent Khovanov homology of tangles.

Let $(X,\leq)$ be a poset. Then $(X, \leq)$ can be regarded as a category, where the objects are the elements of $X$, and the morphisms are the pairs $(x, x')$ such that $x \leq x'$ for $x, x' \in X$.

\begin{definition}
A \emph{persistence tangle in category $\mathcal{P}la^{0}$} is a functor $\mathcal{P}:(X,\leq)\to \mathcal{P}la^{0}$.
\end{definition}
\begin{definition}
Let $\mathcal{P}:(X,\leq)\to \mathcal{P}la^{0}$ be a persistence tangle. The \emph{persistent Khovanov homology of tangles} is the composition of functors
\begin{equation*}
  \xymatrix{
  (X,\leq)\ar@{->}[r]^-{\mathcal{P}}&\mathcal{P}la^{0}\ar@{->}[r]^{H(-;\mathcal{G})}& \mathcal{M}od_{\mathbf{k}}.
  }
\end{equation*}
\end{definition}
For any $a \leq b$ in $X$, the $(a,b)$-persistent Khovanov  homology of   tangles $\mathcal{P}:(X,\leq)\to \mathcal{P}la^{0}$ is given by
\begin{equation*}
  H_{a,b}^{p}(\mathcal{P};\mathcal{G}) = \im \left(H^{p}(\mathcal{P}(a);\mathcal{G}) \to H^{p}(\mathcal{P}(b);\mathcal{G})\right), \quad p \in \mathbb{Z}.
\end{equation*}

\begin{definition}
Let \(\mathcal{C}\) be a category. A construction $F:(\mathbb{R},\le)\longrightarrow \mathcal{C}$ is called a \emph{piecewise persistence object} in $\mathcal{C}$ if there exists a finite partition
\[
-\infty = t_0 < t_1 < \cdots < t_m = +\infty
\]
such that, for each interval \(I_j = [t_{j-1},t_j)\), the restriction $F|_{I_j} : (I_j,\le) \longrightarrow \mathcal{C}$ is a functor.
\end{definition}

\begin{remark}
A piecewise persistence object can also be considered a persistence object. Let $F: (\mathbb{R}, \le) \longrightarrow \mathcal{C}$ be a piecewise persistence object with a finite partition $\varpi$ given by $\mathbb{R} = \bigsqcup_{j} I_{j}$. We regard $\mathbb{R}$ as a poset, denoted by $X(\mathbb{R}, \varpi)$, where the partial order $a \leq b$ holds if and only if $a, b \in I_j$ for some $j$. In this way, the construction $F_{\varpi}: X(\mathbb{R}, \varpi) \to \mathcal{C}$ is a persistence object.
\end{remark}

\begin{example}
Consider a tangle $T$ in a Euclidean plane. Suppose the arcs or circles of $T$ are of finite length. Fix a point $P$ as the center, and let $D_{\varepsilon}$ denote a  disk centered at $P$ with radius $\varepsilon$. For each $\varepsilon$, define the tangle $T_{\varepsilon} = T \cap D_{\varepsilon}$, which may be empty. Then the construction $\mathcal{P}: (\mathbb{R}, \leq) \to \mathcal{P}la^{0}$ defined by $\mathcal{P}(\varepsilon) = T_{\varepsilon}$ is a piecewise persistence tangle.
For any real numbers $a\leq b$ in a common partition interval, we have the corresponding $(a,b)$-persistent Khovanov homology of tangles $H_{a,b}^{\ast}(\mathcal{P};\mathcal{G})$.
\end{example}

\begin{example}
Let $C$ be a finite collection of compact curves in 3-dimensional Euclidean space, and let $q:C \to \mathbb{R}^{2}$ be a projection such that there are finitely many crossings, each of which is required to be a double point. Let $\{D_{\varepsilon}\}_{\varepsilon\in \mathbb{R}}$ be a family of disks in $\mathbb{R}^{2}$ with the same center. Then the intersection $T_{\varepsilon} = q(C) \cap D_{\varepsilon}$ is a tangle (or the empty set) for any $\varepsilon > 0$. This defines a piecewise persistence tangle $T_{\varepsilon}:(\mathbb{R}, \leq) \to \mathcal{P}la^{0}$, which can be used to compute the persistent Khovanov homology of tangles and extract topological features.
\end{example}

In practical applications, persistent tangles can be derived from one-dimensional manifolds embedded in three-dimensional space, or even from collections of non-smooth curves. By computing the persistent Khovanov homology of tangles, one can extract multi-scale topological features, which can then be used to analyze curve-type data. This highlights the significant potential of persistent Khovanov homology of tangles across various application domains in data science.

\section{Acknowledgments}

This work was supported in part by NIH grants R01GM126189, R01AI164266, and R35GM148196, National Science Foundation grants DMS2052983, DMS-1761320, and IIS-1900473, NASA  grant 80NSSC21M0023,   Michigan State University Research Foundation, and  Bristol-Myers Squibb  65109.
Jian was also supported by Natural Science Foundation of China (NSFC Grant No. 12401080) and Scientific Research Foundation of Chongqing University of Technology.
The authors would also like to thank Iker Torres for raising a question on an earlier version of this paper, which helped us to improve the presentation.

\bibliographystyle{plain}  
\bibliography{Reference}

\end{document}